\numberwithin{equation}{section}
\def\red{\textcolor{red}}
\theoremstyle{plain}
\newtheorem{theorem}{Theorem}[section]
\newtheorem{lemma}[theorem]{Lemma}
\newtheorem{corollary}[theorem]{Corollary}
\newtheorem{proposition}[theorem]{Proposition}
\theoremstyle{definition}
\newtheorem{Def}[theorem]{Definition}
\newtheorem{conjecture}[theorem]{Conjecture}
\newtheorem{remark}[theorem]{Remark}
\newtheorem{?}[theorem]{Problem}
\newcommand{\N}{\mathbb{N}}
\newcommand{\Z}{\mathbb{Z}}
\def\DES{\mathrm{DES}}
\def\max{\mathrm{max}}
\def\min{\mathrm{min}}
\def\dd{\mathrm{dd}}
\def\des{\mathrm{des}}
\def\S{\mathfrak{S}}
\def\D{\mathfrak{D}}
\def\T{\mathfrak{T}}
\def\E{\mathcal{E}}
\def\Orb{\mathrm{Orb}}
\def\ides{\mathrm{ides}}
\def\boxit#1{\leavevmode\hbox{\vrule\vtop{\vbox{\kern.33333pt\hrule
    \kern1pt\hbox{\kern1pt\vbox{#1}\kern1pt}}\kern1pt\hrule}\vrule}}
\begin{document}

\title[Unimodal descent polynomials]{On two unimodal descent polynomials}

\author[S. Fu]{Shishuo Fu}
\address[Shishuo Fu]{College of Mathematics and Statistics, Chongqing University, Chongqing 401331, P.R. China} 
\email{fsshuo@cqu.edu.cn}

\author[Z. Lin]{Zhicong Lin}
\address[Zhicong Lin]{School of Science, Jimei University, Xiamen 361021,
P.R. China
\& CAMP, National Institute for Mathematical Sciences, Daejeon 305-811, Republic of Korea} 
\email{lin@nims.re.kr}

\author[J. Zeng]{Jiang Zeng}

\address[Jiang Zeng]{Institut Camille Jordan,  Universit\'{e} Claude Bernard Lyon 1, France}
\email{zeng@math.univ-lyon1.fr}
  
\date{\today}

\begin{abstract} 
The descent polynomials of separable permutations and derangements are 
both  demonstrated to be  
unimodal. Moreover, 
we prove that the $\gamma$-coefficients of 
the first   are positive with
an  interpretation parallel 
 to the classical Eulerian polynomial, while
the second  is spiral, a  property stronger than unimodality. 
Furthermore, we conjecture that they are both real-rooted.
\end{abstract}
\keywords{separable permutations; large Schr\"oder numbers; derangements; $\gamma$-positive; spiral property}

\maketitle


\section{Introduction}
 Many polynomials with combinatorial meanings have been shown to be unimodal; 
see the recent survey of Br\"and\'en~\cite{bran}. Recall that a polynomial $h(t)=\sum_{i=0}^dh_it^i$ of degree $d$ is said to be {\em unimodal} if the coefficients are increasing 
and then decreasing, i.e., there is an index $c$ such that
$
h_0\leq h_1\leq \cdots\leq h_c\geq h_{c+1}\geq\cdots\geq h_d.
$
Let $p(t)=a_rt^r+a_{r+1}t^{r+1}+\cdots +a_{s} t^{s}$ be a real polynomial with $a_r\neq0$ and $a_s\neq0$. It is called  \emph{palindromic}   (or \emph{symmetric}) {\em of center $n/2$}  if $n=r+s$ and $a_{r+i}=a_{s-i}$ for $0\leq i\leq n/2$. For example,
 polynomials $1+t$ and $t$ are palindromic of center  $1/2$ and 1, respectively.
Any palindromic  polynomial $p(t)\in \Z[t]$  can be written uniquely~\cite{bran,swz} as
  $$
p(t)=\sum_{k=r}^{\lfloor\frac{n}{2}\rfloor}\gamma_{k}t^k(1+t)^{n-2k},
$$
where $\gamma_{k}\in\Z$. If $\gamma_{k}\geq 0$ then we say that it is 
 {\em $\gamma$-positive of  center $n/2$}.
It is clear  that the 
$\gamma$-positivity implies palindromicity and unimodality. 
Three prototypes  of combinatorial $\gamma$-positive  polynomials are the binomial polynomials $(1+x)^n$ with $n\in \N$, Eulerian polynomials and Narayana polynomials; see~\eqref{Eulerian} and~\eqref{Nara} below. 
For further $\gamma$-positivity results and problems, the reader is referred to the excellent exposition by Petersen~\cite{Pet} and  the most recent survey
 by Athanasiadis~\cite{Ath}.
The aim of this paper is to provide two new families of combinatorial unimodal 
polynomials,  of which one is $\gamma$-positive (Theorem~\ref{gamma:s}) and another is not palindromic but  has spiral property, which also implies  the unimodality (Theorem~\ref{thm:spiral}).

Let  $\S_n$ be the set of all permutations of $[n]:=\{1,2,\ldots,n\}$. For a permutation $\pi\in\S_n$, written as 
$\pi=\pi_1\pi_2\ldots \pi_n$,
an index $i\in [n]$ is a {\em descent} (resp.~\emph{double descent}) 
of $\pi$ if $\pi_i>\pi_{i+1}$ (resp.~$\pi_{i-1}>\pi_i>\pi_{i+1}$), where 
$\pi_0=\pi_{n+1}=+\infty$. 
Denote by $\des(\pi)$ and $\dd(\pi)$ the number of descents and double descents of $\pi$, respectively.
It is known~\cite{fs, Pet} (see also~\cite{linzen15})  that   the 
descent polynomial on $\S_n$  is the $n$-th {\em Eulerian polynomial}, which is $\gamma$-positive of center  $(n-1)/2$:
\begin{align}\label{Eulerian}
A_n(t):=\sum_{\pi\in\S_n}t^{\des(\pi)}=\sum_{k=0}^{\lfloor\frac{n-1}{2}\rfloor}\gamma_{n,k}^A t^k(1+t)^{n-1-2k},
\end{align}
 where  $\gamma_{n,k}^A=\#\{\pi\in \S_n: \dd(\pi)=0,  \des(\pi)=k\}$.


Patterns in permutations has been extensively studied in the literature (see for instance
Kitaev's book~\cite{ki}). 
A permutation $\pi$ is said to contain the permutation $\sigma$  if there exists a subsequence of (not necessarily consecutive) entries of $\pi$ that has the same relative order as $\sigma$, and in this case $\sigma$ is said to be a pattern of $\pi$; otherwise, 
$\pi$ is said to avoid  $\sigma$.  The set of permutations avoiding patterns
$\sigma_1, \ldots, \sigma_r$ in $\S_n$ is denoted by $\S_n(\sigma_1, \ldots, \sigma_r)$.
The descent polynomial over $\S_n(231)$ is the $n$-th 
 {\em Narayana polynomial}~\cite[Chapter 2]{Pet},  which 
 is also $\gamma$-positive of center $(n-1)/2$; see \cite[Theorem 4.2]{Pet} or \cite[Proposition 11.14]{prw} for an equivalent statement:
\begin{align}\label{Nara}
N_n(t):=\sum_{\pi\in \S_n(231)} t^{\des(\pi)}=\sum_{k=0}^{\lfloor\frac{n-1}{2}\rfloor}\gamma_{n,k}^N t^k(1+t)^{n-1-2k},
\end{align}
 where  $\gamma_{n,k}^N=\#\{\pi\in \S_n(231): \dd(\pi)=0,  \des(\pi)=k\}$. 
 
A  permutation avoiding patterns $2413$ and $3142$ is called a 
 \emph{separable permutation}. 
It is known (see \cite{sh,we}) that separable permutations are counted by the \emph{large Schr\"oder numbers}.  The first few numbers are $1,2,6,22,90,394,1806$, see \href{http://oeis.org/A006318}{\tt{oeis:A006318}}. Our first main result is the following $\gamma$-expansion for the descent polynomial on separable permutations.

\begin{theorem}\label{gamma:s} We have 
\begin{equation}\label{gam:scho}
S_n(t):=\sum_{\pi\in\S_n(2413,3142)}t^{\des(\pi)} =\sum_{k\geq 0}^{\lfloor\frac{n-1}{2}\rfloor}\gamma_{n,k}^S t^k (1+t)^{n-1-2k},
\end{equation}
where 
$\gamma_{n,k}^S=\#\{\pi \in \S_n(2413, 3142): \dd(\pi)=0, \des(\pi)=k\}$.
Consequently, the polynomial $S_n(t)$ is $\gamma$-positive and a fortiori, palindromic and unimodal.
\end{theorem}

For example, the first expansions  of $S_n(t)$  read as follows:
\begin{align*}
&S_1(t)=1; \,\,S_2(t)=1+t;\\
&S_3(t)=1+4t+t^2=(1+t)^2+2t;\\
&S_4(t)=1+10t+10t^2+t^3=(1+t)^3+7t(1+t);\\
&S_5(t)=1+20t+48t^2+20t^3+t^4=(1+t)^4+16t(1+t)^2+10t^2;\\
&S_6(t)=1+35t+161t^2+161t^3+35t^4+t^5=(1+t)^5+30t(1+t)^3+61t^2(1+t).
\end{align*}
The palindromicity $S_n(t)=t^{n-1}S_n(1/t)$ follows from the involution
$$
\pi_1\pi_2\cdots\pi_n\mapsto\pi_n\pi_{n-1}\cdots\pi_1
$$
and the fact that $\S_n(2413,3142)$ is invariant under this involution. Though this class of palindromic polynomials already exists in OEIS 
(see~\href{https://oeis.org/A175124}{\tt{oeis:A175124}}), its interpretation as descent polynomials of separable permutations seems new. 
Note that  both~\eqref{Eulerian} and~\eqref{Nara} can be proved  using the {\em modified Foata--Strehl action} (see~\cite{fsh,br,linzen15}) on  $\S_n$, but since $\S_n(2413,3142)$ is not invariant under this action,  it is unclear how Theorem~\ref{gamma:s} could be deduced by the same manner.

A {\em derangement} is a fixed-point free permutation. Let $\D_n$ be the set of derangements in $\S_n$ and 
consider   the descent polynomial of derangements 
$$
D_n(t):=\sum_{\pi\in\D_n}t^{\des(\pi)}.
$$
The first few values of $D_n(t)$ are listed as follows: 
\begin{align*}
&D_2(t)=t; \,\,D_3(t)=2t; \\
&D_4(t)=4t+4t^2+t^3;\\
&D_5(t)=8t+24t^2+12t^3;\\
&D_6(t)=16t+104t^2+120t^3+24t^4+t^5;\\
&D_7(t)=32t+392t^2+896t^3+480t^4+54t^5.
\end{align*}

We have the following spiral property for $D_n(t)$.

 \begin{theorem}\label{thm:spiral}
 Let $D_n(t)=\sum_{k\geq1}d_{n,k}t^k$. Then, for $n\geq1$ and $1\leq k\leq n-1$
 \begin{equation}\label{spiral}
 d_{2n,2n-k}< d_{2n,k}<d_{2n,2n-k-1}\quad\text{and}\quad
 d_{2n+1,k}< d_{2n+1,2n-k}< d_{2n+1,k+1}
 \end{equation}
  except that $d_{4,1}=d_{4,2}=4$. In particular, the polynomial $D_n(t)$ is unimodal and the maximum coefficient is reached 
  at the center $\lfloor n/2\rfloor$.
 \end{theorem}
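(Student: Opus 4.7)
The plan is to prove the spiral inequalities \eqref{spiral} by induction on $n$, driven by a two-term recurrence for the derangement descent polynomial.

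First, I would reformulate \eqref{spiral} as a single strict chain of coefficients. For even $n = 2m$, this chain reads
\[
d_{2m, 2m-1} < d_{2m, 1} < d_{2m, 2m-2} < d_{2m, 2} < \cdots < d_{2m, m+1} < d_{2m, m-1} < d_{2m, m},
\]
and for odd $n = 2m+1$,
\[
d_{2m+1, 1} < d_{2m+1, 2m-1} < d_{2m+1, 2} < d_{2m+1, 2m-2} < \cdots < d_{2m+1, m+1} < d_{2m+1, m}.
\]
The spiral property is then equivalent to strict monotonicity along these chains (with the single exception at $n=4$).

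Second, I would derive a recurrence of the form $D_n(t) = f_n(t)\, D_{n-1}(t) + g_n(t)\, D_{n-2}(t)$, with $f_n, g_n$ polynomials having nonnegative coefficients. Combinatorially this comes from classifying each derangement of $[n]$ according to whether the value $n$ lies in a $2$-cycle or in a longer cycle of $\pi$, and tracking how the descent statistic transforms when $n$ (and, in the $2$-cycle case, its partner) is removed. Alternatively, the recurrence can be extracted from the exponential generating function $\sum_n D_n(t)\,z^n/n! = (1-t)/(e^{tz} - t e^z)$. Comparing coefficients of $t^k$ yields a linear recurrence
\[
d_{n,k} = \alpha\, d_{n-1,k} + \beta\, d_{n-1,k-1} + \gamma\, d_{n-2,k-1} + \delta\, d_{n-2,k-2},
\]
with $\alpha, \beta, \gamma, \delta$ nonnegative and depending polynomially on $n$ and $k$. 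Each target inequality $d_{n,a} < d_{n,b}$ in the $D_n$-chain then expands into an inequality between positive linear combinations of lower-level coefficients, which I would hope to deduce term-wise from the inductive hypothesis on $D_{n-1}$ and $D_{n-2}$ together with trivial positivity.

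The main obstacle will be twofold. First, the spiral chains for $D_{n-1}$ and $D_{n-2}$ have different parity patterns than that of $D_n$, so matching indices between the $D_n$-chain and the lower chains is nontrivial; if the recurrence coefficients do not line up cleanly with the spiral ordering, supplementary pairwise bijective or algebraic comparisons (say between $\alpha\, d_{n-1,k} + \beta\, d_{n-1,k-1}$ for different $k$) will be needed. Second, the isolated equality $d_{4,1} = d_{4,2}$ prevents launching the induction at $n=4$: I would verify the spiral chain by hand for small $n$ (up to roughly $n = 7$) and then argue that the strict positivity of the recurrence coefficients propagates strict inequality for all $n \ge 5$, with the exception at $n=4$ remaining isolated.
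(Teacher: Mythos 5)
Your reduction of \eqref{spiral} to strict monotonicity along the two interleaved chains is correct, and induction on $n$ driven by a coefficient recurrence is exactly the right strategy --- it is the one the paper uses. The genuine gap is in the recurrence itself. The exponential generating function you quote, $(1-t)/(e^{tz}-te^{z})$, generates the \emph{excedance} polynomials of derangements, not the descent polynomials; the two statistics are not equidistributed on $\D_n$ (for descents $D_4(t)=4t+4t^2+t^3$, whereas the excedance polynomial of $\D_4$ is $t+7t^2+t^3$). Likewise, the cycle-surgery decomposition (is $n$ in a $2$-cycle or not?) interacts cleanly with excedances but not with descents of the one-line word, so neither of your two proposed sources actually yields the recurrence you need. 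There is also an internal inconsistency: a relation $D_n=f_n(t)D_{n-1}+g_n(t)D_{n-2}$ with polynomial $f_n,g_n$ can only produce coefficients $\alpha,\beta,\gamma,\delta$ depending on $n$ alone, whereas the $k$-dependence you invoke requires a differential term. As written, the induction has nothing to run on.

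What the paper does instead: starting from the D\'esarm\'enien--Foata / Gessel--Reutenauer identity \eqref{gessel}, it extracts the one-step recurrence $D_n(t)=(-1)^nt^{n-1}+(1+(n-1)t)D_{n-1}(t)+t(1-t)D'_{n-1}(t)$, i.e.\ $d_{n,k}=(k+1)d_{n-1,k}+(n-k)d_{n-1,k-1}$ for $k\neq n-1$ together with the boundary values $d_{n,n-1}\in\{0,1\}$ (this recurrence is also proved bijectively via desarrangements and inverse descents). With it, each target difference becomes a positive combination of differences one level down, e.g.\ $d_{2m,k}-d_{2m,2m-k}=(2m-k+1)(d_{2m-1,k-1}-d_{2m-1,2m-k})+k(d_{2m-1,k}-d_{2m-1,2m-k-1})+(d_{2m-1,k}-d_{2m-1,k-1})$, and every parenthesis is positive by the inductive hypothesis for $n=2m-1$; so the parity mismatch between consecutive levels that you flag as an obstacle resolves itself automatically. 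If you replace your step 2 by this recurrence (proved either from \eqref{gessel} or bijectively), the rest of your outline goes through essentially as in the paper, with the $n=4$ equality checked by hand as the isolated base-case exception.
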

\begin{remark}
This kind of spiral property was first observed by Zhang~\cite{zh} for 
the {\em excedance polynomial} on derangements.
 A type B analogue of Zhang's result  was later proved  by Chen, Tang and Zhao~\cite{ctz}  and further generalized by Shin and Zeng~\cite[Corollary 7]{sz} to colored derangements, i.e., derangements 
 in wreath product $\Z_r\rtimes \S_n$ for $r\geq 2$.
 \end{remark}

The rest of this paper is organized as follows. In Section~\ref{sec:spswdt}, we build a simple bijection between separable permutations and some rooted binary trees that we shall call direct-skew trees (or di-sk trees).  Utilizing the model of di-sk trees and a crucial bijection, a proof of Theorem~\ref{gamma:s} is given in Section~\ref{sec:gammasep}.  
We prove Theorem~\ref{thm:spiral} in Section~\ref{sec:spiral}. Finally, we conclude the paper with some remarks and conjectures.

\section{From separable permutations to di-sk trees}\label{sec:spswdt}

We begin with a well-known characterization of $\S_n(2413,3142)$ using two classical operations on permutations. Let   $\pi\in\S_k$ and $\sigma\in\S_l$ be two permutations. 
The \emph{direct sum} $\pi\oplus\sigma$ and the \emph{skew sum} $\pi\ominus\sigma$, of $\pi$ and $\sigma$, are permutations in $\S_{k+l}$ defined respectively as
$$
(\pi\oplus\sigma)_i=
\begin{cases}
\pi_i, &\text{for $i\in[1,k]$};\\
\sigma_{i-k}+k, &\text{for $i\in[k+1,k+l]$}.
\end{cases}
$$
and
$$
(\pi\ominus\sigma)_i=
\begin{cases}
\pi_i+l, &\text{for $i\in[1,k]$};\\
\sigma_{i-k}, &\text{for $i\in[k+1,k+l]$}.
\end{cases}
$$
 For instance, we have $123\oplus 21=12354$ and $123\ominus 21=34521$. The following  characterization of separable permutations is  folkloric  (cf.~\cite[page 57]{ki}) in pattern avoidance. 

 \begin{proposition}\label{desides}
 A permutation is separable if and only if it can be built from the permutation $1$ by applying the operations $\oplus$ and $\ominus$ repeatedly. 
 \end{proposition}

Next, we introduce a certain kind of labelled rooted binary trees. 
\begin{Def}\label{diskorder}
A  rooted  binary tree is called   \emph{di-sk tree} if its nodes are labelled either with  $\oplus$ or $\ominus$ and no node has the same label as its right child.  We use  the \emph{in-order} (tranversal) to compare nodes on di-sk trees:  starting with the root node, we recursively traverse the left subtree to parent then to the right subtree if any. See Fig.~\ref{di-sk} for a di-sk tree and its in-order.
The number of nodes labelled by $\ominus$ (resp.~$\oplus$)  in a di-sk tree $T$  is denoted by $n_\ominus(T)$ (resp.~$n_\oplus(T)$). The set of all di-sk trees with $n-1$ nodes is denoted as $\D\T_n$. 
\end{Def}

 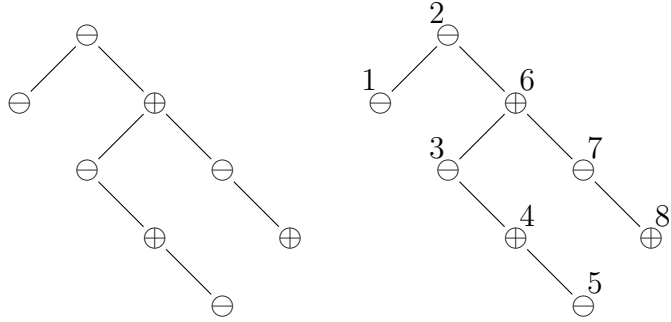
\begin{figure}
\begin{tikzpicture}[scale=0.3]
\draw[-] (17,10) to (19,12);
\draw[-] (20,12) to (22,10);
\draw[-] (22,9) to (20,7);
\draw[-] (23,9) to (25,7);
\draw[-] (20,6) to (22,4);
\draw[-] (26,6) to (28,4);
\draw[-] (23,3) to (25,1);

\draw[-] (33,10) to (35,12);
\draw[-] (36,12) to (38,10);
\draw[-] (38,9) to (36,7);
\draw[-] (39,9) to (41,7);
\draw[-] (36,6) to (38,4);
\draw[-] (42,6) to (44,4);
\draw[-] (39,3) to (41,1);

\node at (16.5,9.5) {$\ominus$};
\node at (19.5,12.5) {$\ominus$};
\node at (22.5,9.5) {$\oplus$};
\node at (19.5,6.5) {$\ominus$};
\node at (25.5,6.5) {$\ominus$};
\node at (22.5,3.5) {$\oplus$};
\node at (25.5,0.5) {$\ominus$};
\node at (28.5,3.5) {$\oplus$};

\node at (32.5,9.5) {$\ominus$};
\node at (35.5,12.5) {$\ominus$};
\node at (38.5,9.5) {$\oplus$};
\node at (35.5,6.5) {$\ominus$};
\node at (41.5,6.5) {$\ominus$};
\node at (38.5,3.5) {$\oplus$};
\node at (41.5,0.5) {$\ominus$};
\node at (44.5,3.5) {$\oplus$};
\node at (32,10.5) {$1$};
\node at (35,13.5) {$2$};
\node at (39,10.5) {$6$};
\node at (35,7.5) {$3$};
\node at (42,7.5) {$7$};
\node at (39,4.5) {$4$};
\node at (42,1.5) {$5$};
\node at (45,4.5) {$8$};
\end{tikzpicture}
\caption{A di-sk tree and its in-order.\label{di-sk}}
\end{figure}

The following bijection between separable permutations and  di-sk trees is essentially due to Shapiro and Stephens~\cite{sh}.

\begin{theorem}\label{disktr}
There exists a  bijection 
 $\Phi: \S_n(2413,3142)\rightarrow\D\T_n$ such that 
 \begin{align}\label{minu:des}
 i\in \DES(\pi) \quad\Leftrightarrow \quad \text{the $i$th node (by in-order) of $\Phi(\pi)$ is $\ominus$},
 \end{align}
 where $\DES(\pi)$ is the set of all descents of $\pi$. Consequently, 
 \begin{equation}\label{tree:version}
S_n(t)=\sum_{T\in \D\T_n} t^{n_\ominus(T)}.
\end{equation}
  \end{theorem}
  
 \begin{proof} The bijection $\Phi$ can be constructed recursively. Set $\Phi(12)=\oplus$ and $\Phi(21)=\ominus$ (by convention $\Phi(1)=\emptyset$). For $\pi\in\S_n(2413,3142)$ with $n\geq3$, we find the greatest index $i\in[n-1]$ such that either 
 $$\min\{\pi_1,\ldots,\pi_i\}>\max\{\pi_{i+1},\ldots,\pi_n\}\quad\text{or}\quad\max\{\pi_1,\ldots,\pi_i\}<\min\{\pi_{i+1},\ldots,\pi_n\}.$$
 By Proposition~\ref{desides}, such an index $i$ exists and is unique. If the first inequality holds, then $\pi=\omega\ominus\upsilon$ with $\omega=(\pi_1+i-n)\cdots(\pi_i+i-n)\in\S_i(2413,3142)$ and $\upsilon=\pi_{i+1}\cdots\pi_{n}\in\S_{n-i}(2413,3142)$. Define $\Phi(\pi)=(\Phi(\omega),\ominus,\Phi(\upsilon))$, the tree with the left subtree $\Phi(\omega)$ and the right subtree $\Phi(\upsilon)$ attached to the root $\ominus$. Otherwise, we have $\pi=\omega\oplus\upsilon$, where $\omega=\pi_1\cdots\pi_i\in\S_i(2413,3142)$ and $\upsilon=(\pi_{i+1}-i)\cdots(\pi_{n}-i)\in\S_{n-i}(2413,3142)$. We then define $\Phi(\pi)=(\Phi(\omega),\oplus,\Phi(\upsilon))$, the tree with the left subtree $\Phi(\omega)$ and the right subtree $\Phi(\upsilon)$ attached to the root $\oplus$.
 
 Since $i$ is chosen to be the greatest,  the root of $\Phi(\pi)$ has a different label than its right child, which is the root of $\Phi(\upsilon)$. This shows that $\Phi(\pi)$ is a di-sk tree and  so $\Phi$ is well defined. For example, if $\pi=984132756$, then $\Phi(\pi)$ is the di-sk tree in Fig.~\ref{di-sk}. 
 It is not hard to check that $\Phi$ is a bijection satisfying the property~\eqref{minu:des}, which completes the proof.
 \end{proof}

\begin{corollary}
For $n\geq2$, we have the following recurrence relation for $S_n(t)$:
\begin{equation}\label{rec:ss}
S_n(t)=(1+t)S_{n-1}(t)+t\sum_{j=1}^{n-2}S_j(t)\biggl(S_{n-j-1}(t)+\sum_{i=1}^{n-j-1}S_i(t)S_{n-j-i}(t)\biggr).
\end{equation}
Equivalently, 
\begin{equation*}\label{cubic}
S(t,z)=z+(1+t)zS(t,z)+tzS^2(t,z)+tS^3(t,z),
\end{equation*}
where $S(t,z):=\sum_{n\geq1}S_n(t)z^n$.
\end{corollary}
\begin{proof}

For $n\geq2$, let 
$$
S_n^{(1)}(t):=\sum_{T\in\D\T^{\oplus}_n}t^{n_\ominus(T)}\qquad\text{and}\qquad S_n^{(2)}(t):=\sum_{T\in\D\T^{\ominus}_n}t^{n_\ominus(T)},
$$
where $\D\T^{\oplus}_n$ and  $\D\T^{\ominus}_n$ are the set of all di-sk trees in $\D\T_n$ with root labelled by $\oplus$ and $\ominus$, respectively. 
It follows from~\eqref{tree:version} that
\begin{equation}\label{trrr}
S_n(t)=S_n^{(1)}(t)+S_n^{(2)}(t)
\end{equation}
 if $n\geq2$. For convenience, we set $S_1^{(1)}(t)=S_1^{(2)}(t)=1$. 
We claim that for $n\geq2$:
\begin{equation}\label{s1}
S_n^{(1)}(t)=\sum_{j=1}^{n-1}S_j(t)S_{n-j}^{(2)}(t)\qquad\text{and}\qquad S_n^{(2)}(t)=t\sum_{j=1}^{n-1}S_j(t)S_{n-j}^{(1)}(t).
\end{equation}
Actually,  any di-sk tree in $\D\T^{\oplus}_n$ can be constructed from a root labelled $\oplus$ by attaching a di-sk tree on the left branch and a di-sk tree with a root labelled $\ominus$ on the right branch. This gives the first expression in~\eqref{s1}. The second expression in~\eqref{s1} follows by similar decomposition of a di-sk tree in $\D\T^{\ominus}_n$.

For $n\geq1$, let 
$$
S^*_n(t):=tS_n^{(1)}(t)+S_n^{(2)}(t).
$$
Note that $S^*_1(t)=1+t$.
It follows from~\eqref{s1} and~\eqref{trrr} that, for $n\geq2$
\begin{equation}\label{rec:s}
S_n(t)=\sum_{j=1}^{n-1}S_j(t)(S_{n-j}^{(2)}(t)+tS_{n-j}^{(1)}(t))=\sum_{j=1}^{n-1}S_j(t)S^*_{n-j}(t)
\end{equation}
and 
\begin{align*}
S^*_n(t)=\sum_{j=1}^{n-1}S_j(t)(tS_{n-j}^{(2)}(t)+tS_{n-j}^{(1)}(t))=tS_{n-1}(t)+t\sum_{j=1}^{n-1}S_j(t)S_{n-j}(t).
\end{align*}
Substituting the latter into~\eqref{rec:s}, we get~\eqref{rec:ss}.
\end{proof}

\begin{remark}
In his 2008 thesis~\cite[Example~1.6.7]{dra}, Drake uses an inversion  theorem for labelled trees to compute the generating function for di-sk trees by the number of $\ominus$-nodes. However, there is a mistake in his computation, which erroneously leads him to a different sequence in OEIS~\href{https://oeis.org/A089447}{\tt{oeis:A089447}}.
\end{remark}

It is not hard to show that if $A(t)$ and $B(t)$ are $\gamma$-positive of center $m/2$ and $n/2$ respectively, then $A(t)B(t)$ is $\gamma$-positive of center 
 $(m+n)/2$.
The $\gamma$-positivity of $S_n(t)$ then follows from~\eqref{rec:ss} by induction on $n$. 
Let $\Gamma_n(x):=\sum_{k=0}^{\lfloor\frac{n-1}{2}\rfloor}\gamma_{n,k}^Sx^k$ be the $\gamma$-polynomial of $S_n(t)$, where $\gamma_{n,k}^S$ is defined by~\eqref{gam:scho}. In fact, the recurrence relation~\eqref{rec:ss} for $S_n(t)$ is equivalent to the following recurrence for $\Gamma_n(x)$, because 
$$
S_n(t)=(1+t)^{n-1}\Gamma_n(x),\quad\text{with $x=\frac{t}{(1+t)^2}$}.
$$
\begin{corollary}The recurrence relation for $\Gamma_n(x)$ is
$$
\Gamma_n(x)=\Gamma_{n-1}(x)+x\sum_{j=1}^{n-2}\Gamma_j(x)\biggl(\Gamma_{n-j-1}(x)+\sum_{i=1}^{n-j-1}\Gamma_i(x)\Gamma_{n-j-i}(x)\biggr)
$$
with initial value $\Gamma_1(x)=1$.
\end{corollary}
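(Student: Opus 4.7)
The corollary is essentially an algebraic reformulation of~\eqref{rec:ss}, so my plan is to carry out the substitution $S_n(t)=(1+t)^{n-1}\Gamma_n(x)$ with $x=t/(1+t)^2$ and track the powers of $(1+t)$ carefully. Once this is done the identity should fall out term by term.

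First I would dispose of the base case: $S_1(t)=1=(1+t)^0\cdot 1$ forces $\Gamma_1(x)=1$. Next, for $n\geq 2$, I substitute the relation $S_m(t)=(1+t)^{m-1}\Gamma_m(x)$ into every occurrence of $S_m$ on both sides of~\eqref{rec:ss}. The left-hand side becomes $(1+t)^{n-1}\Gamma_n(x)$. The first term on the right is
\[
(1+t)S_{n-1}(t) = (1+t)\cdot(1+t)^{n-2}\Gamma_{n-1}(x) = (1+t)^{n-1}\Gamma_{n-1}(x).
\]
For the two remaining summations, the exponents of $(1+t)$ telescope nicely: a product $S_j(t)S_{n-j-1}(t)$ contributes $(1+t)^{(j-1)+(n-j-2)}=(1+t)^{n-3}$, and a triple product $S_j(t)S_i(t)S_{n-j-i}(t)$ contributes $(1+t)^{(j-1)+(i-1)+(n-j-i-1)}=(1+t)^{n-3}$. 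So both sums factor through $(1+t)^{n-3}$ and reduce to the corresponding sums over $\Gamma$-polynomials.

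The remaining key step is the substitution $t = x(1+t)^2$, which is forced by $x=t/(1+t)^2$. This turns the prefactor $t(1+t)^{n-3}$ that appears in front of both sums on the right-hand side into
\[
t(1+t)^{n-3} = x(1+t)^2(1+t)^{n-3} = x(1+t)^{n-1}.
\]
Thus after substitution the right-hand side of~\eqref{rec:ss} becomes
\[
(1+t)^{n-1}\!\left[\Gamma_{n-1}(x) + x\sum_{j=1}^{n-2}\Gamma_j(x)\biggl(\Gamma_{n-j-1}(x) + \sum_{i=1}^{n-j-1}\Gamma_i(x)\Gamma_{n-j-i}(x)\biggr)\right].
\]
Dividing through by the common factor $(1+t)^{n-1}$ and equating with the left-hand side yields precisely the claimed recurrence for $\Gamma_n(x)$.

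There is no real obstacle beyond bookkeeping; the only minor care needed is to verify that $t$ and $(1+t)$ are algebraically independent enough that the equality in $\Z[t]$ descends to an identity in the new variable $x$. This is clear because $\Gamma_n$ is a polynomial in $x$ and the map $t\mapsto x=t/(1+t)^2$ is a well-defined substitution into rational functions, under which the identity $(1+t)^{n-1}\Gamma_n(x) = (1+t)^{n-1}[\cdots]$ in $\Z[t]$ becomes the claimed identity in $\Z[x]$ after cancelling the nonzero factor $(1+t)^{n-1}$.
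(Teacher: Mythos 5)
Your proposal is correct and is exactly the argument the paper intends: the corollary is stated as an immediate consequence of substituting $S_m(t)=(1+t)^{m-1}\Gamma_m(x)$ with $x=t/(1+t)^2$ into~\eqref{rec:ss}, and your careful tracking of the powers of $(1+t)$ (each product contributing $(1+t)^{n-3}$ and $t(1+t)^{n-3}=x(1+t)^{n-1}$) simply spells out the bookkeeping the paper leaves implicit. The closing remark about descending from an identity in $t$ to one in $x$ is a sensible extra precaution and is easily justified since $t\mapsto t/(1+t)^2$ is non-constant.
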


\section{Proof of Theorem~\ref{gamma:s}}\label{sec:gammasep}

This section is devoted to a purely combinatorial  proof of Theorem~\ref{gamma:s}. We first introduce a natural action on di-sk trees, which gives an interpretation  for $\gamma^S_{n,k}$.

\subsection{A natural action on di-sk trees}
\begin{Def}
Given a di-sk tree, its {\em right chain} (or simply {\em chain}) is any maximal chain composed of only right edges. For instance, there are $3$ right chains of the di-sk trees in Fig.~\ref{di-sk}, which are the chains $1$, $2-6-7-8$ and $3-4-5$. The {\em length of a  chain} $C$, denoted $|C|$, is the number of nodes of $C$. We distinguish a chain by {\em even} or {\em odd} according to its length. The first node and the last node of a chain are called the {\em head} and the {\em tail} of this chain, respectively. A chain with its head labelled by $\ominus$ (resp.~$\oplus$) is called a {\em$\ominus$-chain} (resp.~{\em$\oplus$-chain}).
\end{Def}

Since no node has the same label as its right child in a di-sk tree, the labels on each chain must alternate, and therefore is completely decided by the label of its head. For each $x\in[n-1]$ and $T\in\D\T_n$, we define the action $\varphi_x$ as: if the $x$-th node of $T$ is the head of an odd chain $C$ of $T$, then   $\varphi_x(T)$ is obtained from $T$ by changing all labels of nodes on $C$; otherwise, $\varphi_x(T)=T$. Clearly, $\varphi_x$ is an involution acting on $\D\T_n$ and $\varphi_x$ and $\varphi_y$ commute for all $x,y\in[n-1]$. Thus, for any subset $S\subseteq[n-1]$ we can define the function $\varphi_S:\D\T_n\rightarrow\D\T_n$ by $\varphi_S=\prod_{x\in S}\varphi_x$.  Hence the group $\Z_2^{n-1}$ acts on $\D\T_n$ via the function $\varphi_S$, $S\subseteq[n-1]$. 

Let us introduce the subset 
$$
\D\T_{n,k}^{1}:=\{T\in\D\T_n :  \text{ all odd chains are $\oplus$-chains and }n_\ominus(T)=k\},
$$
For each $T\in\D\T_{n,k}^{1}$, since all its odd chains are $\oplus$-chains, we see the total number of odd chains is given by
$$n_\oplus(T)-n_\ominus(T)=n_\oplus(T)+n_\ominus(T)-2n_\ominus(T)=n-1-2k.$$ 
Let $\Orb(T)$ be the orbit of $T$ under the above $\Z_2^{n-1}$-action. Note that $T$ is the only di-sk tree in $\Orb(T)\cap\D\T_{n,k}^{1}$ with the minimal number of $\ominus$-nodes. It then follows that 
$$
\sum_{S\in\Orb(T)}t^{n_\ominus(S)}=t^k(1+t)^{n-1-2k},
$$
since each $S\in\Orb(T)$ can be obtained from $T$ by turning some of the  $n-1-2k$  odd $\oplus$-chains into odd $\ominus$-chains. Summing over all the orbits we get

\begin{lemma}\label{int:1}
We have the following $\gamma$-expansion
$$
S_n(t)=\sum_{T\in \D\T_n} t^{n_\ominus(T)}=\sum_{k\geq 0}^{\lfloor(n-1)/2\rfloor}|\D\T_{n,k}^1| t^k (1+t)^{n-1-2k}.
$$
\end{lemma}

By property~\eqref{minu:des}, the bijection $\Phi$ in Theorem~\ref{disktr}  induces a bijection between the following two subsets:
\begin{align*}
\S_{n,k}^S:= & \{\pi \in \S_n(2413,3142) : \dd(\pi)=0, \des(\pi)=k\},\\
\D\T_{n,k}^{2}:= &\{T\in\D\T_n : \text{ $T$ has no consecutive $\ominus$, its first node is $\oplus$ and}\,\,\, n_\ominus(T)=k\}.
\end{align*}
Therefore, in view of Lemma~\ref{int:1}, to show that $\gamma^S_{n,k}=|\S_{n,k}^S|$, it  suffices to build a bijection between $\D\T_{n,k}^{1}$ and $\D\T_{n,k}^{2}$.

\subsection{A bijection between $\D\T_{n,k}^{2}$ and $\D\T_{n,k}^{1}$}
 
\begin{theorem}\label{gammabij}
There exists a bijection $\Psi:\D\T_{n,k}^{2}\rightarrow\D\T_{n,k}^{1}$ for each $0\leq k\leq\lfloor(n-1)/2\rfloor$. Consequently, Theorem~\ref{gamma:s} is true.
\end{theorem}

\begin{Def}
We say two nodes are {\em at the same level} if they are connected by a sequence of left edges. Then, whether two chains are at the same level or not is according to their heads. For example, in Fig.~\ref{di-sk},  chains $1$ and $2-6-7-8$ are at the same level, which is different from that of chain $3-4-5$.  We also order the chains by the in-order of their heads and denote $A>B$ if chain $A$ appears before  chain $B$. 
\end{Def}

Our $\Psi$ when restricted to $\D\T^{1}_{n,k}\cap \D\T^{2}_{n,k}$ is simply identity. So, we only need to define the mapping $\Psi$ from  $\D\T^{2}_{n,k}\backslash \D\T^{1}_{n,k}$ to $\D\T^{1}_{n,k}\backslash \D\T^{2}_{n,k}$.

\subsubsection{The construction of $\Psi:\D\T^{2}_{n,k}\backslash \D\T^{1}_{n,k}\rightarrow\D\T^{1}_{n,k}\backslash \D\T^{2}_{n,k}$}

Our goal is to remove all the odd $\ominus$-chains, and create consecutive $\ominus$ nodes and/or a new first node labelled $\ominus$. The construction of $\Psi$ will be a sequence of ``cut-and-paste'' in nature. We first define the ``adjoint'' of a given odd $\ominus$-chain. This is crucial in finding where to apply our ``cut-and-paste'' operation.

\begin{Def}[Adjoints for odd $\ominus$-chains]\label{adjoint}
Given a di-sk tree $T\in \D\T^{2}_{n,k}\backslash \D\T^{1}_{n,k}$, for each odd $\ominus$-chain $C$ of $T$, we find a unique odd $\oplus$-chain $C^{*}$ of $T$, called the {\em adjoint of $C$}, at the same level, according to the two cases below. We denote $F$ (resp.~$L$) the first (resp.~the last) node at the same level as the head of $C$.
\begin{itemize}
\item[Case I:] $L$ is the root or a non-root labelled $\oplus$. This forces $F$ to be labelled $\oplus$, either because it is the first node or because the left parent of $L$ is labelled $\ominus$ and we forbid consecutive $\ominus$. We scan all the nodes at the same level, between $F$ and the head of $C$, find the closest one to $C$ that is labelled $\oplus$ (at least we have $F$), it must be leading an odd chain, denote this chain as $C^{*}$. Consequently, all chains between $C$ and $C^{*}$ are even $\ominus$-chains. 
\item[Case II:] $L$ is a non-root labelled $\ominus$. We scan all the nodes at the same level, between $L$ and the head of $C$, find the closest one to $C$ that is labelled $\ominus$ (at least we have $L$). Consequently, all chains between $C$ and this closest $\ominus$-chain are $\oplus$-chains and we denote the last one of them as $C^{*}$. So $C^{*}$ must be an odd $\oplus$-chain. 
\end{itemize}
\end{Def}

For comparing with our construction of $\Psi^{-1}$, we split the above two cases into  $6$ cases as depicted in Fig.~\ref{findadj}. Note that the dash line means this portion of the tree could be of any type, including the empty set case.

\begin{figure}
\begin{tikzpicture}[scale=0.21]
\draw[-,dashed] (0,0) to (2,2);
\draw[-] (3,3) to (5,5);
\draw[-] (6,6) to (8,8);
\draw[-] (9,9) to (11,11);
\draw[-] (12,12) to (14,14);
\draw[-,dashed] (15,15) to (17,17);
\draw[-,dashed] (15,14) to (17,12);
\draw[-] (6,5) to (8,3);
\draw[-] (9,2) to (11,0);
\draw[-] (12,-1) to (14,-3);
\draw[-] (9,8) to (11,6);
\draw[-] (12,11) to (14,9);
\draw[-] (15,8) to (17,6);
\draw[-,dashed] (0,-25) to (2,-23);
\draw[-] (3,-22) to (5,-20);
\draw[-] (6,-19) to (8,-17);
\draw[-] (9,-16) to (11,-14);
\draw[-] (12,-13) to (14,-11);
\draw[-,dashed] (15,-10) to (17,-8);
\draw[-,dashed] (15,-11) to (17,-13);
\draw[-] (3,-23) to (5,-25);
\draw[-] (6,-26) to (8,-28);
\draw[-] (6,-20) to (8,-22);
\draw[-] (9,-23) to (11,-25);
\draw[-] (12,-26) to (14,-28);
\draw[-] (9,-17) to (11,-19);
\draw[-] (12,-14) to (14,-16);
\draw[-] (15,-17) to (17,-19);
\draw[-,dashed] (20,-3) to (22,-1);
\draw[-] (23,0) to (25,2);
\draw[-] (26,3) to (28,5);
\draw[-] (29,6) to (31,8);
\draw[-,dashed] (32,9) to (34,11);
\draw[-] (26,2) to (28,0);
\draw[-,dashed] (32,8) to (34,6);
\draw[-,dashed] (35,11) to (37,9);
\draw[-] (34,12) to (32,14);
\draw[-,dashed] (31,15) to (29,17);
\draw[-,dashed] (20,-28) to (22,-26);
\draw[-] (23,-25) to (25,-23);
\draw[-] (26,-22) to (28,-20);
\draw[-] (29,-19) to (31,-17);
\draw[-,dashed] (32,-16) to (34,-14);
\draw[-] (23,-26) to (25,-28);
\draw[-] (26,-29) to (28,-31);
\draw[-] (26,-23) to (28,-25);
\draw[-,dashed] (32,-17) to (34,-19);
\draw[-,dashed] (35,-14) to (37,-16);
\draw[-] (34,-13) to (32,-11);
\draw[-,dashed] (31,-10) to (29,-8);
\draw[-,dashed] (41,-3) to (43,-1);
\draw[-] (44,0) to (46,2);
\draw[-,dashed] (47,3) to (49,5);
\draw[-,dashed] (50,6) to (52,8);
\draw[-] (53,9) to (55,11);
\draw[-,dashed] (56,12) to (58,14);
\draw[-,dashed] (50,5) to (52,3);
\draw[-] (53,8) to (55,6);
\draw[-] (56,5) to (58,3);
\draw[-,dashed] (56,11) to (58,9);
\draw[-,dashed] (59,14) to (61,12);
\draw[-] (58,15) to (56,17);
\draw[-,dashed] (55,18) to (53,20);
\draw[-,dashed] (41,-31) to (43,-29);
\draw[-] (44,-28) to (46,-26);
\draw[-,dashed] (47,-25) to (49,-23);
\draw[-,dashed] (50,-22) to (52,-20);
\draw[-] (53,-19) to (55,-17);
\draw[-,dashed] (56,-16) to (58,-14);
\draw[-] (47,-26) to (49,-28);
\draw[-] (50,-29) to (52,-31);
\draw[-,dashed] (50,-23) to (52,-25);
\draw[-] (53,-20) to (55,-22);
\draw[-] (56,-23) to (58,-25);
\draw[-,dashed] (56,-17) to (58,-19);
\draw[-,dashed] (59,-14) to (61,-16);
\draw[-] (58,-13) to (56,-11);
\draw[-,dashed] (55,-10) to (53,-8);

\node at (2.5,2.5) {$\oplus$};
\node at (5.5,5.5) {$\ominus$};
\node at (8.5,8.5) {$\ominus$};
\node at (11.5,11.5) {$\ominus$};
\node at (14.5,14.5) {$\oplus$};
\node at (8.5,2.5) {$\oplus$};
\node at (11.5,5.5) {$\oplus$};
\node at (14.5,8.5) {$\oplus$};
\node at (11.5,-0.5) {$\ominus$};
\node at (14.5,-3.5) {$\oplus$};
\node at (17.5,5.5) {$\ominus$};
\node at (2.5,-22.5) {$\oplus$};
\node at (5.5,-19.5) {$\ominus$};
\node at (8.5,-16.5) {$\ominus$};
\node at (11.5,-13.5) {$\ominus$};
\node at (14.5,-10.5) {$\oplus$};
\node at (5.5,-25.5) {$\ominus$};
\node at (8.5,-22.5) {$\oplus$};
\node at (11.5,-19.5) {$\oplus$};
\node at (14.5,-16.5) {$\oplus$};
\node at (8.5,-28.5) {$\oplus$};
\node at (11.5,-25.5) {$\ominus$};
\node at (14.5,-28.5) {$\oplus$};
\node at (17.5,-19.5) {$\ominus$};
\node at (22.5,-0.5) {$\oplus$};
\node at (25.5,2.5) {$\ominus$};
\node at (28.5,5.5) {$\ominus$};
\node at (31.5,8.5) {$\oplus$};
\node at (34.5,11.5) {$\oplus$};
\node at (31.5,14.5) {$\ominus$};
\node at (28.5,-0.5) {$\oplus$};
\node at (22.5,-25.5) {$\oplus$};
\node at (25.5,-22.5) {$\ominus$};
\node at (28.5,-19.5) {$\ominus$};
\node at (31.5,-16.5) {$\oplus$};
\node at (34.5,-13.5) {$\oplus$};
\node at (31.5,-10.5) {$\ominus$};
\node at (25.5,-28.5) {$\ominus$};
\node at (28.5,-31.5) {$\oplus$};
\node at (28.5,-25.5) {$\oplus$};
\node at (43.5,-0.5) {$\oplus$};
\node at (46.5,2.5) {$\ominus$};
\node at (49.5,5.5) {$\oplus$};
\node at (52.5,8.5) {$\oplus$};
\node at (55.5,11.5) {$\ominus$};
\node at (58.5,14.5) {$\ominus$};
\node at (55.5,17.5) {$\oplus$};
\node at (55.5,5.5) {$\ominus$};
\node at (58.5,2.5) {$\oplus$};
\node at (43.5,-28.5) {$\oplus$};
\node at (46.5,-25.5) {$\ominus$};
\node at (49.5,-22.5) {$\oplus$};
\node at (52.5,-19.5) {$\oplus$};
\node at (55.5,-16.5) {$\ominus$};
\node at (58.5,-13.5) {$\ominus$};
\node at (55.5,-10.5) {$\oplus$};
\node at (49.5,-28.5) {$\oplus$};
\node at (52.5,-31.5) {$\ominus$};
\node at (55.5,-22.5) {$\ominus$};
\node at (58.5,-25.5) {$\oplus$};
\node at (8,-6) {Case I-1};
\node at (8,-35) {Case I-2};
\node at (28,-6) {Case I-3};
\node at (28,-35) {Case I-4};
\node at (51,-6) {Case II-1};
\node at (51,-35) {Case II-2};
\node at (35,13) {$L$};
\node at (35,-12) {$L$};
\node at (59,16) {$L$};
\node at (59,-12) {$L$};
\node at (2,4) {$C^*$};
\node at (11,13) {$C$};
\node at (2,-21) {$C^*$};
\node at (11,-12) {$C$};
\node at (22,1) {$C^*$};
\node at (28,7) {$C$};
\node at (22,-24) {$C^*$};
\node at (28,-18) {$C$};
\node at (46,4) {$C$};
\node at (52,10) {$C^*$};
\node at (46,-24) {$C$};
\node at (52,-18) {$C^*$};
\end{tikzpicture}
\caption{{\bf I-1.} $L$ is the root and $|C^*|=1$; {\bf I-2.} $L$ is the root and $|C^*|>1$; {\bf I-3.} $L$ is a non-root labelled $\oplus$  and $|C^*|=1$; {\bf I-4.} $L$ is a non-root labelled $\oplus$  and $|C^*|>1$; {\bf II-1.} $L$ is a non-root labelled  $\ominus$  and $|C|=1$; {\bf II-2.} $L$ is a non-root labelled $\ominus$  and $|C|>1$.}
\label{findadj}
\end{figure}
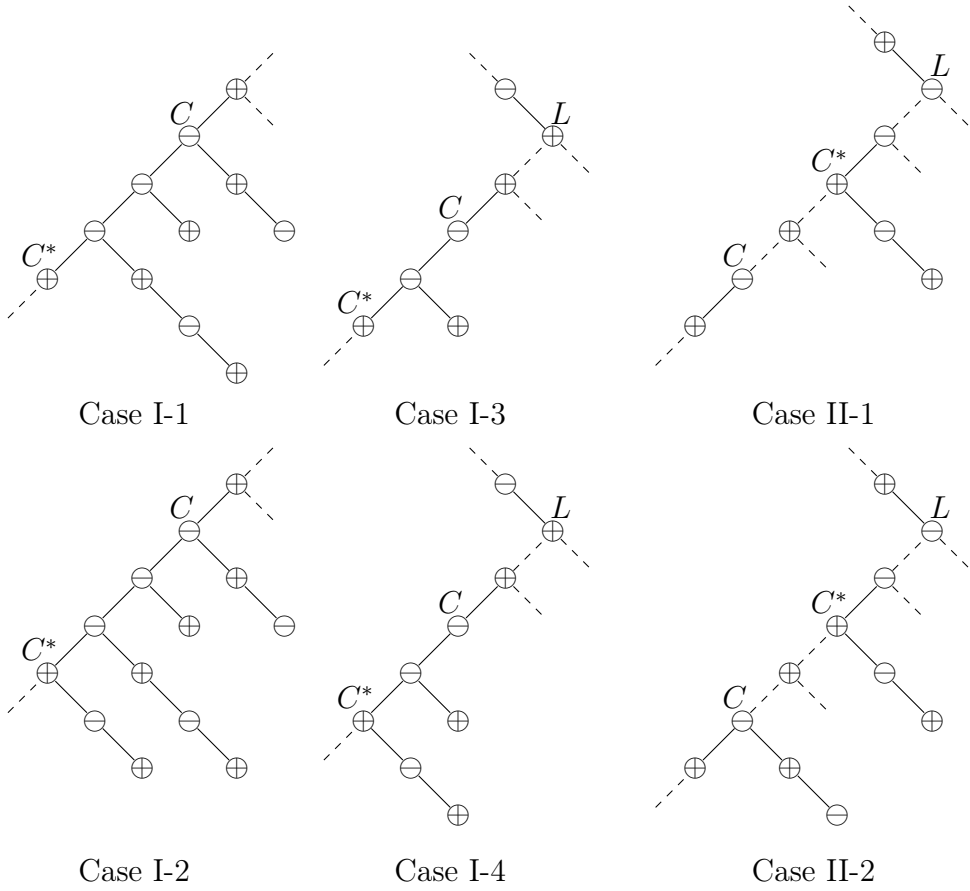

Now we are ready to define $\Psi:\D\T^{2}_{n,k}\backslash \D\T^{1}_{n,k}\rightarrow\D\T^{1}_{n,k}\backslash \D\T^{2}_{n,k}$. Take any $T\in \D\T^{2}_{n,k}\backslash \D\T^{1}_{n,k}$. Find all the adjoints $C^*$ for all the odd $\ominus$-chains $C$. We perform the following operations for all pairs $(C,C^*)$ and denote the resulting tree as $\Psi(T)$.

\begin{itemize}
\item[Case I:] $C^*>C$. We cut off the tail of $C^*$ (labelled $\oplus$), together with its left subtree if any, and attach it to the tail of $C$ (labelled $\ominus$) from right.
\item[Case II:] $C>C^*$. We cut off the tail of $C$ (labelled $\ominus$), together with its left subtree if any, and attach it to the tail of $C^*$ (labelled $\oplus$) from right.
\end{itemize}

\begin{remark}
Note that in both cases, no two pairs $(C_1,C_1^{*})$ and $(C_2,C_2^{*})$ can intertwine with each other. By applying $\Psi$, we eliminate an odd $\ominus$-chain, and create either a consecutive pair of $\ominus$ nodes, or a new first node labelled $\ominus$, so $\Psi(T)$ is indeed in $\D\T^{1}_{n,k}\backslash \D\T^{2}_{n,k}$ and $\Psi$ is well defined. 
\end{remark}

We give in Fig.~\ref{findL} the corresponding $\Psi(T)$ for the six cases included in Fig.~\ref{findadj}. In the following, we show that $\Psi$ is a bijection by constructing its inverse. 

\subsubsection{The construction of $\Psi^{-1}:\D\T^{1}_{n,k}\backslash \D\T^{2}_{n,k}\rightarrow\D\T^{2}_{n,k}\backslash \D\T^{1}_{n,k}$}

\begin{lemma}\label{head}
For a pair of consecutive $\ominus$-nodes in a di-sk $T$ in $\D\T^{1}_{n,k}\backslash \D\T^{2}_{n,k}$, exactly one of $\ominus$ in such a pair is the head of certain chain.
\end{lemma}
\begin{proof}
Since all $\ominus$-chains of $T$ are even, any two consecutive $\ominus$-nodes can not both be the heads of certain chains. The result then follows from the fact that two $\ominus$-nodes, neither of which is the head of certain chain, can not form a pair of consecutive  $\ominus$-nodes.
\end{proof}

Because of Lemma~\ref{head}, an even $\ominus$-chain of  $T\in\D\T^{1}_{n,k}\backslash \D\T^{2}_{n,k}$, whose head either forms a pair of consecutive $\ominus$-nodes (with another adjacency $\ominus$-node) or is  the first node of $T$,  is called a {\em bad even $\ominus$-chain}.

\begin{figure}
\begin{tikzpicture}[scale=0.21]
\draw[-] (6,6) to (8,8);
\draw[-] (9,9) to (11,11);
\draw[-] (12,12) to (14,14);
\draw[-,dashed] (15,15) to (17,17);
\draw[-,dashed] (15,14) to (17,12);
\draw[-] (6,5) to (8,3);
\draw[-] (9,2) to (11,0);
\draw[-] (12,-1) to (14,-3);
\draw[-] (9,8) to (11,6);
\draw[-] (12,11) to (14,9);
\draw[-] (15,8) to (17,6);
\draw[-] (18,5) to (20,3);
\draw[-,dashed] (20,2) to (18,0);
\draw[-,dashed] (0,-25) to (2,-23);
\draw[-] (3,-22) to (5,-20);
\draw[-] (6,-19) to (8,-17);
\draw[-] (9,-16) to (11,-14);
\draw[-] (12,-13) to (14,-11);
\draw[-,dashed] (15,-10) to (17,-8);
\draw[-,dashed] (15,-11) to (17,-13);
\draw[-] (3,-23) to (5,-25);
\draw[-] (6,-20) to (8,-22);
\draw[-] (9,-23) to (11,-25);
\draw[-] (12,-26) to (14,-28);
\draw[-] (9,-17) to (11,-19);
\draw[-] (12,-14) to (14,-16);
\draw[-] (15,-17) to (17,-19);
\draw[-] (18,-20) to (20,-22);
\draw[-] (28,3) to (30,5);
\draw[-] (31,6) to (33,8);
\draw[-,dashed] (34,9) to (36,11);
\draw[-] (28,2) to (30,0);
\draw[-] (31,5) to (33,3);
\draw[-,dashed] (33,2) to (32,1);
\draw[-,dashed] (34,8) to (36,6);
\draw[-,dashed] (37,11) to (39,9);
\draw[-] (36,12) to (34,14);
\draw[-,dashed] (33,15) to (31,17);
\draw[-,dashed] (22,-28) to (24,-26);
\draw[-] (25,-25) to (27,-23);
\draw[-] (28,-22) to (30,-20);
\draw[-] (31,-19) to (33,-17);
\draw[-,dashed] (34,-16) to (36,-14);
\draw[-] (25,-26) to (27,-28);
\draw[-] (28,-23) to (30,-25);
\draw[-] (31,-20) to (33,-22);
\draw[-,dashed] (34,-17) to (36,-19);
\draw[-,dashed] (37,-14) to (39,-16);
\draw[-] (36,-13) to (34,-11);
\draw[-,dashed] (33,-10) to (31,-8);
\draw[-,dashed] (50,6) to (52,8);
\draw[-] (53,9) to (55,11);
\draw[-,dashed] (56,12) to (58,14);
\draw[-,dashed] (50,5) to (52,3);
\draw[-] (53,8) to (55,6);
\draw[-] (56,5) to (58,3);
\draw[-] (59,2) to (61,0);
\draw[-,dashed] (56,11) to (58,9);
\draw[-,dashed] (59,14) to (61,12);
\draw[-] (58,15) to (56,17);
\draw[-,dashed] (55,18) to (53,20);
\draw[-] (61,-1) to (59,-3);
\draw[-,dashed] (58,-4) to (56,-6);
\draw[-,dashed] (44,-28) to (46,-26);
\draw[-,dashed] (47,-25) to (49,-23);
\draw[-,dashed] (50,-22) to (52,-20);
\draw[-] (53,-19) to (55,-17);
\draw[-,dashed] (56,-16) to (58,-14);
\draw[-] (47,-26) to (49,-28);
\draw[-,dashed] (50,-23) to (52,-25);
\draw[-] (53,-20) to (55,-22);
\draw[-] (56,-23) to (58,-25);
\draw[-] (59,-26) to (61,-28);
\draw[-,dashed] (56,-17) to (58,-19);
\draw[-,dashed] (59,-14) to (61,-16);
\draw[-] (58,-13) to (56,-11);
\draw[-,dashed] (55,-10) to (53,-8);

\node at (5.5,5.5) {\red{$\ominus$}};
\node at (8.5,8.5) {$\ominus$};
\node at (11.5,11.5) {$\ominus$};
\node at (14.5,14.5) {$\oplus$};
\node at (8.5,2.5) {$\oplus$};
\node at (11.5,5.5) {$\oplus$};
\node at (14.5,8.5) {$\oplus$};
\node at (11.5,-0.5) {$\ominus$};
\node at (14.5,-3.5) {$\oplus$};
\node at (17.5,5.5) {$\ominus$};
\node at (20.5,2.5) {$\oplus$};
\node at (2.5,-22.5) {$\oplus$};
\node at (5.5,-19.5) {\red{$\ominus$}};
\node at (8.5,-16.5) {$\ominus$};
\node at (11.5,-13.5) {$\ominus$};
\node at (14.5,-10.5) {$\oplus$};
\node at (5.5,-25.5) {\red{$\ominus$}};
\node at (8.5,-22.5) {$\oplus$};
\node at (11.5,-19.5) {$\oplus$};
\node at (14.5,-16.5) {$\oplus$};
\node at (11.5,-25.5) {$\ominus$};
\node at (14.5,-28.5) {$\oplus$};
\node at (17.5,-19.5) {$\ominus$};
\node at (20.5,-22.5) {$\oplus$};
\node at (27.5,2.5) {\red{$\ominus$}};
\node at (30.5,5.5) {$\ominus$};
\node at (33.5,8.5) {$\oplus$};
\node at (36.5,11.5) {$\oplus$};
\node at (33.5,14.5) {\red{$\ominus$}};
\node at (30.5,-0.5) {$\oplus$};
\node at (33.5,2.5) {$\oplus$};
\node at (24.5,-25.5) {$\oplus$};
\node at (27.5,-22.5) {\red{$\ominus$}};
\node at (30.5,-19.5) {$\ominus$};
\node at (33.5,-16.5) {$\oplus$};
\node at (36.5,-13.5) {$\oplus$};
\node at (33.5,-10.5) {$\ominus$};
\node at (27.5,-28.5) {\red{$\ominus$}};
\node at (30.5,-25.5) {$\oplus$};
\node at (33.5,-22.5) {$\oplus$};
\node at (49.5,5.5) {$\oplus$};
\node at (52.5,8.5) {$\oplus$};
\node at (55.5,11.5) {\red{$\ominus$}};
\node at (58.5,14.5) {$\ominus$};
\node at (55.5,17.5) {$\oplus$};
\node at (55.5,5.5) {$\ominus$};
\node at (58.5,2.5) {$\oplus$};
\node at (61.5,-0.5) {\red{$\ominus$}};
\node at (58.5,-3.5) {$\oplus$};
\node at (46.5,-25.5) {$\ominus$};
\node at (49.5,-22.5) {$\oplus$};
\node at (52.5,-19.5) {$\oplus$};
\node at (55.5,-16.5) {\red{$\ominus$}};
\node at (58.5,-13.5) {$\ominus$};
\node at (55.5,-10.5) {$\oplus$};
\node at (49.5,-28.5) {$\oplus$};
\node at (55.5,-22.5) {$\ominus$};
\node at (58.5,-25.5) {$\oplus$};
\node at (61.5,-28.5) {\red{$\ominus$}};
\node at (8,-5.5) {Case $I'$-1};
\node at (8,-33) {Case $I'$-2};
\node at (30,-5.5) {Case $I'$-3};
\node at (30,-33) {Case $I'$-4};
\node at (51,-5.5) {Case $II'$-1};
\node at (51,-33) {Case $II'$-2};
\node at (37.2,13.3) {$L$};
\node at (37.2,-11.8) {$L$};
\node at (59.2,16.2) {$L$};
\node at (59.2,-11.8) {$L$};
\node at (4.8,7.2) {$D$};
\node at (10.6,13) {$D^*$};
\node at (4.8,-17.8) {$D$};
\node at (10.8,-12) {$D^*$};
\node at (26.5,4) {$D$};
\node at (29.8,7) {$D^*$};
\node at (26.8,-20.8) {$D$};
\node at (29.8,-17.8) {$D^*$};
\node at (54.8,13.2) {$D$};
\node at (48.2,6.8) {$D^*$};
\node at (48.8,-20.8) {$D^*$};
\node at (54.8,-14.8) {$D$};
\end{tikzpicture}
\caption{{\bf1.} Six cases for $\Psi^{-1}$ corresponding to Fig.~\ref{findadj}}
\label{findL}
\end{figure}
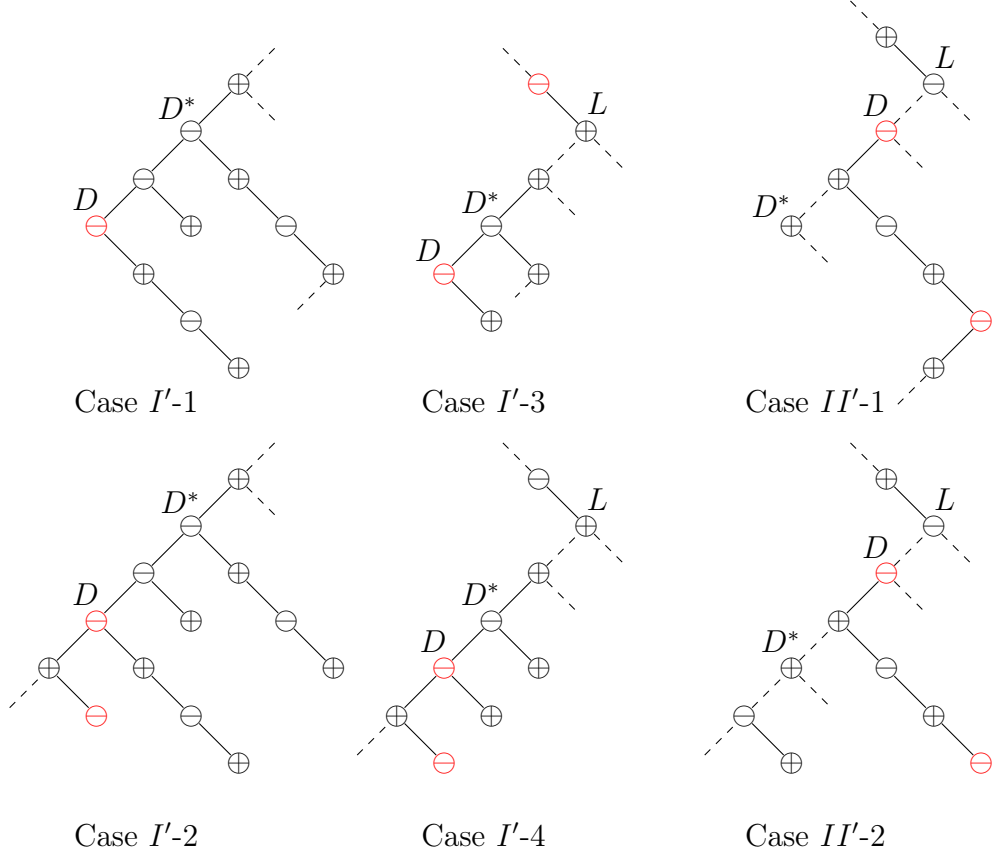

\begin{Def}[Adjoints for bad even $\ominus$-chains]\label{L}
Given a di-sk tree $T\in \D\T^{1}_{n,k}\backslash \D\T^{2}_{n,k}$, for each bad even $\ominus$-chain $D$ of $T$, we find a unique chain $D^*$ of $T$ at the same level, called the {\em adjoint of $D$}, according to the following two cases. We also denote $L$ the last node at the same level as the head of $D$.
\begin{itemize}
\item[Case $I'$:] $L$ is the root or a non-root labelled $\oplus$. We scan all the nodes from the head of $D$ to $L$, stop when we encounter a $\oplus$ or when we reach $L$, choose the last $\ominus$-chain as $D^*$. In this case, $D^*$ may equal $D$. Note that all the chains between $D$ and $D^*$ are even $\ominus$-chains. 
\item[Case $II'$:] $L$ is a non-root labelled $\ominus$. We scan all the nodes at the same level and before the head of $D$, stop when we encounter a $\ominus$ or when we reach the first node at this level, choose the last $\oplus$-chain as $D^*$. Note that all the chains between $D$ and $D^*$ are $\oplus$-chains.
\end{itemize}
%
%
\end{Def}

\begin{figure}[]
\begin{tikzpicture}[scale=0.21]
\draw[-] (8,1) to (10,3);
\draw[-] (11,4) to (13,6);
\draw[-] (14,7) to (16,9);
\draw[-] (17,10) to (19,12);
\draw[-] (20,13) to (22,15);
\draw[-] (14,19) to (16,21);
\draw[-] (17,22) to (19,24);
\draw[-] (20,25) to (22,27);
\draw[-] (2,13) to (4,15);
\draw[-] (5,16) to (7,18);
\draw[-] (2,25) to (4,27);
\draw[-] (5,28) to (7,30);
\draw[-] (8,31) to (10,33);
\draw[-] (11,34) to (13,36);
\draw[-] (14,37) to (16,39);
\draw[-] (17,40) to (19,42);
\draw[-] (11,3) to (13,1);
\draw[-] (17,9) to (19,7);
\draw[-] (20,6) to (22,4);
\draw[-] (20,12) to (22,10);
\draw[-] (17,21) to (19,19);
\draw[-] (20,18) to (22,16);
\draw[-] (20,24) to (22,22);
\draw[-] (2,12) to (4,10);
\draw[-] (5,9) to (7,7);
\draw[-] (5,15) to (7,13);
\draw[-] (8,12) to (10,10);
\draw[-] (2,24) to (4,22);
\draw[-] (5,21) to (7,19);
\draw[-] (8,18) to (10,16);
\draw[-] (11,15) to (13,13);
\draw[-] (5,27) to (7,25);
\draw[-] (8,30) to (10,28);
\draw[-] (11,27) to (13,25);
\draw[-] (11,33) to (13,31);
\draw[-] (14,30) to (16,28);
\draw[-] (14,36) to (16,34);
\draw[-] (17,33) to (19,31);
\draw[-] (20,30) to (22,28);
\draw[-] (32,25) to (34,27);
\draw[-] (35,28) to (37,30);
\draw[-] (38,31) to (40,33);
\draw[-] (41,34) to (43,36);
\draw[-] (44,37) to (46,39);
\draw[-] (32,13) to (34,15);
\draw[-] (35,16) to (37,18);
\draw[-] (50,19) to (52,21);
\draw[-] (53,22) to (55,24);
\draw[-] (44,1) to (46,3);
\draw[-] (47,4) to (49,6);
\draw[-] (50,7) to (52,9);
\draw[-] (53,10) to (55,12);
\draw[-] (32,12) to (34,10);
\draw[-] (35,15) to (37,13);
\draw[-] (38,12) to (40,10);
\draw[-] (41,9) to (43,7);
\draw[-] (32,24) to (34,22);
\draw[-] (35,21) to (37,19);
\draw[-] (38,18) to (40,16);
\draw[-] (35,27) to (37,25);
\draw[-] (38,30) to (40,28);
\draw[-] (41,27) to (43,25);
\draw[-] (44,24) to (46,22);
\draw[-] (41,33) to (43,31);
\draw[-] (44,30) to (46,28);
\draw[-] (44,42) to (46,40);
\draw[-] (44,36) to (46,34);
\draw[-] (47,33) to (49,31);
\draw[-] (50,30) to (55,25);
\draw[-] (53,21) to (55,19);
\draw[-] (50,18) to (52,16);
\draw[-] (53,15) to (55,13);
\draw[-] (44,0) to (46,-2);
\draw[-] (47,3) to (49,1);
\draw[-] (50,6) to (52,4);
\draw[-] (53,3) to (55,1);
\draw[-] (53,9) to (55,7);
\draw[-] (56,12) to (58,10);

\node at (7.5,0.5) {$\oplus$};
\node at (10.5,3.5) {$\ominus$};
\node at (13.5,6.5) {$\ominus$};
\node at (16.5,9.5) {$\oplus$};
\node at (19.5,12.5) {$\oplus$};
\node at (22.5,15.5) {$\oplus$};
\node at (13.5,18.5) {$\ominus$};
\node at (16.5,21.5) {$\oplus$};
\node at (19.5,24.5) {$\ominus$};
\node at (22.5,27.5) {$\ominus$};
\node at (1.5,12.5) {$\oplus$};
\node at (4.5,15.5) {$\ominus$};
\node at (7.5,18.5) {$\oplus$};
\node at (1.5,24.5) {$\oplus$};
\node at (4.5,27.5) {$\ominus$};
\node at (7.5,30.5) {$\ominus$};
\node at (10.5,33.5) {$\oplus$};
\node at (13.5,36.5) {$\oplus$};
\node at (16.5,39.5) {$\oplus$};
\node at (19.5,42.5) {$\ominus$};
\node at (4.5,21.5) {$\ominus$};
\node at (7.5,24.5) {$\oplus$};
\node at (10.5,27.5) {$\oplus$};
\node at (13.5,30.5) {$\ominus$};
\node at (16.5,33.5) {$\ominus$};
\node at (13.5,24.5) {$\ominus$};
\node at (16.5,27.5) {$\oplus$};
\node at (19.5,30.5) {$\oplus$};
\node at (4.5,9.5) {$\ominus$};
\node at (7.5,12.5) {$\oplus$};
\node at (10.5,15.5) {$\ominus$};
\node at (7.5,6.5) {$\oplus$};
\node at (10.5,9.5) {$\ominus$};
\node at (13.5,12.5) {$\oplus$};
\node at (19.5,18.5) {$\ominus$};
\node at (22.5,21.5) {$\oplus$};
\node at (13.5,0.5) {$\oplus$};
\node at (19.5,6.5) {$\ominus$};
\node at (22.5,9.5) {$\ominus$};
\node at (22.5,3.5) {$\oplus$};

\node at (43.5,0.5) {$\ominus$};
\node at (46.5,3.5) {$\ominus$};
\node at (49.5,6.5) {$\oplus$};
\node at (52.5,9.5) {$\oplus$};
\node at (55.5,12.5) {$\oplus$};
\node at (46.5,-2.5) {$\oplus$};
\node at (49.5,0.5) {$\oplus$};
\node at (52.5,3.5) {$\ominus$};
\node at (55.5,6.5) {$\ominus$};
\node at (58.5,9.5) {$\ominus$};
\node at (55.5,0.5) {$\oplus$};
\node at (52.5,15.5) {$\ominus$};
\node at (55.5,18.5) {$\oplus$};
\node at (49.5,18.5) {$\oplus$};
\node at (52.5,21.5) {$\ominus$};
\node at (55.5,24.5) {$\ominus$};
\node at (43.5,6.5) {$\oplus$};
\node at (40.5,9.5) {$\ominus$};
\node at (34.5,9.5) {$\ominus$};
\node at (37.5,12.5) {$\oplus$};
\node at (40.5,15.5) {$\ominus$};
\node at (46.5,21.5) {$\oplus$};
\node at (31.5,12.5) {$\oplus$};
\node at (34.5,15.5) {$\ominus$};
\node at (37.5,18.5) {$\oplus$};
\node at (43.5,24.5) {$\ominus$};
\node at (46.5,27.5) {$\oplus$};
\node at (49.5,30.5) {$\oplus$};
\node at (34.5,21.5) {$\ominus$};
\node at (37.5,24.5) {$\oplus$};
\node at (40.5,27.5) {$\oplus$};
\node at (43.5,30.5) {$\ominus$};
\node at (46.5,33.5) {$\ominus$};
\node at (31.5,24.5) {$\oplus$};
\node at (34.5,27.5) {$\ominus$};
\node at (37.5,30.5) {$\ominus$};
\node at (40.5,33.5) {$\oplus$};
\node at (43.5,36.5) {$\oplus$};
\node at (46.5,39.5) {$\oplus$};
\node at (43.5,42.5) {$\ominus$};

\node at (12,-6) {$T\in \D\T^{2}\backslash \D\T^{1}$};
\node at (44,-6) {$\Psi(T)\in \D\T^{1}\backslash \D\T^{2}$};
\node at (18.8,44.2) {$C_5$};
\node at (15.8,41.2) {$C_5^*$};
\node at (6.8,32.2) {$C_2$};
\node at (0.8,26.2) {$C_2^*$};
\node at (3.8,17.2) {$C_1$};
\node at (0.8,14.2) {$C_1^*$};
\node at (12.8,8.2) {$C_4$};
\node at (6.8,2.2) {$C_4^*$};
\node at (12.8,20.2) {$C_3$};
\node at (15.8,23.2) {$C_3^*$};
\node at (40,44) {$D_5^*=D_5$};
\node at (33.7,29.2) {$D_2$};
\node at (36.8,32.3) {$D_2^*$};
\node at (45.8,5.2) {$D_4^*$};
\node at (42.3,2) {$D_4$};

\node at (32,17.2) {$D_1^*=D_1$};

\node at (47.5,19.4) {$D^*_3$};
\node at (50.5,22.4) {$D_3$};
\node at (27,25) {$\Psi \atop \longrightarrow$};
\node at (27,21) {$\longleftarrow\atop \Psi^{-1}$};
\end{tikzpicture}
\caption{An example of $\Psi$.}
\label{bijexample}
\end{figure}
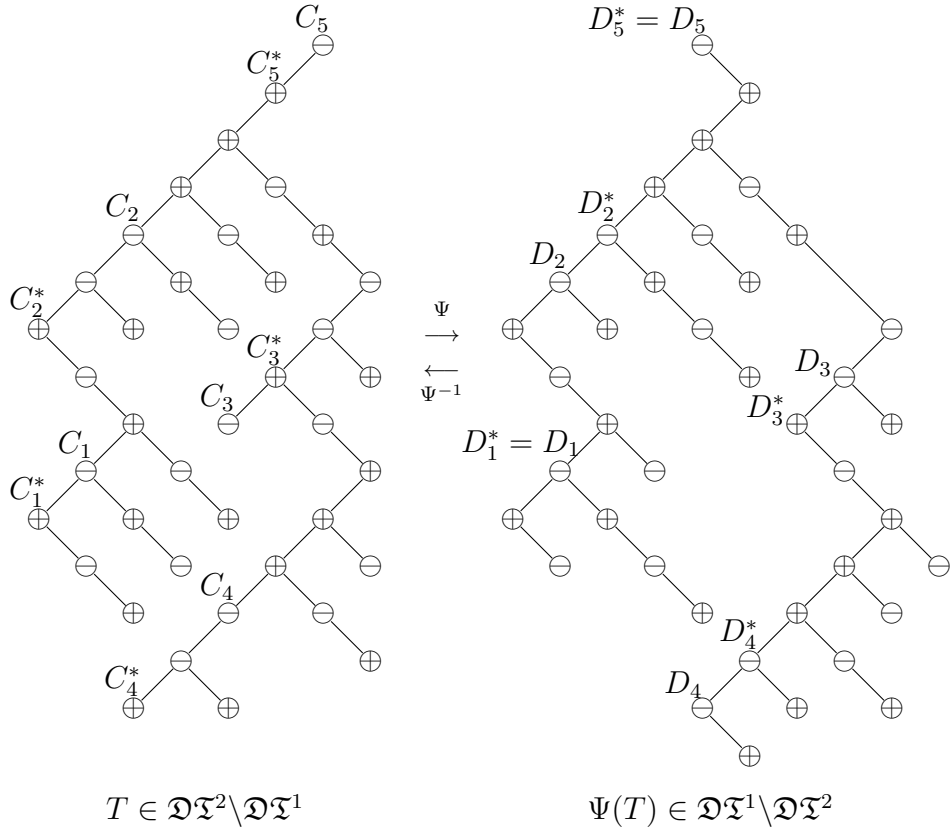

 It is routine to check that no two pairs $(D_1,D_1^*)$ and $(D_2,D_2^*)$ can intertwine with each other.
We can now construct $\Psi^{-1}$ as follows.  Take any $T\in \D\T^{1}_{n,k}\backslash \D\T^{2}_{n,k}$. Find all the adjoints $D^*$ for all bad even $\ominus$-chains $D$. We perform the following operations and denote the resulting tree as $\Psi^{-1}(T)$.
\begin{itemize}
\item $D\ge D^*$ and $D$ is the first chain at this level (see Cases $I'$-1, $I'$-3 in Fig.~\ref{findL}). We cut off the tail of $D^*$ (labelled $\oplus$), together with its left subtree if any, and attach it to the head of $D$ from left, creating a new first chain at this level, and leaving $D^*$ as an odd $\ominus$-chain.
\item $D\ge D^*$ and $D$ is not the first chain at this level (see Cases $I'$-2, $I'$-4 in Fig.~\ref{findL}). We cut off the tail of $D^*$ (labelled $\oplus$), together with its left subtree if any, and attach it (from right) to the tail of the chain just before $D$, leaving $D^*$ as an odd $\ominus$-chain.
\item $D^*>D$ and $D^*$ is the first chain at this level (see Case $II'$-1 in Fig.~\ref{findL}). We cut off the node just before the head of $D$ (labelled $\ominus$), together with its left subtree if any, and attach it to the head of $D^*$ from left, creating a new first chain at this level, which is a one-node $\ominus$-chain.
\item $D^*>D$ and $D^*$ is not the first chain at this level (see Case $II'$-2 in Fig.~\ref{findL}). We cut off the node just before the head of $D$ (labelled $\ominus$), together with its left subtree if any, and attach it (from right) to the tail of the chain just before $D^*$, making it an odd $\ominus$-chain.
\end{itemize}

In each case, either the first $\ominus$-node or a pair of consecutive $\ominus$ will disappear, but a new odd $\ominus$-chain will arise, so $\Psi^{-1}(T)$ is in $\D\T^{2}_{n,k}\backslash \D\T^{1}_{n,k}$ indeed, and $\Psi^{-1}$ is well defined. It is fairly easy to check in all 6 cases (compare Fig.~\ref{findL} with Fig.~\ref{findadj}) that $\Psi^{-1}$ is really the inverse of $\Psi$, as desired.
Such a delicate construction deserves some examples, and we offer one in Fig.~\ref{bijexample}.

\begin{remark}
Our bijection $\Psi$ between $\D\T_{n,k}^{2}$ and $\D\T_{n,k}^{1}$ is a bit involved. It would be appealing to find a direct group action on di-sk trees $\D\T_n$ such that each orbit has exactly one element in $\D\T_{n,k}^{2}$. The {\em tree Eulerian polynomial} $T_n(t)=\prod_{i=1}^{n-1}(n-i+it)$ is the descent polynomial of labeled  rooted trees on $[n]$. Recently, Gonz\'alez D'Le\'on~\cite{go} found several  interpretations  for the $\gamma$-coefficients of  $T_n(t)$ in terms of other combinatorial models but not in terms of labeled rooted trees.
Another  open problem that has the same flavor is to 
define an action on labeled rooted trees which results in  an interpretation for the corresponding $\gamma$-coefficients.  
\end{remark}
\section{Spiral property for $D_n(t)$}
\label{sec:spiral}

 To prove the unimodality of $D_n(t)$, we shall apply the following formula of 
 D\'esarm\'enien--Foata~\cite{df} and Gessel--Reutenauer~\cite{ge}:
\begin{equation}\label{gessel}
 \sum_{n\geq2}\frac{D_n(t)}{(1-t)^{n+1}}z^n=\sum_{r\geq1}\frac{t^{r-1}}{1-rz}(1-z)^r.
\end{equation}
 
 \begin{lemma}
 The polynomial $D_n(t)$ satisfies the following recurrence relation:
 \begin{equation}\label{der}
 D_n(t)=(-1)^nt^{n-1}+(1+(n-1)t)D_{n-1}(t)+t(1-t)D'_{n-1}(t).
 \end{equation}
 Equivalently, 
 \begin{equation}\label{recu:der}
 d_{n,k}=
 \begin{cases}
 \,\,1, \qquad\qquad\qquad\qquad\text{if $n$ is even and $k=n-1$;}\\
 \,\,0,\,\,\qquad\qquad\qquad\qquad \text{if $n$ is odd and $k=n-1$;}\\
 \,\,(k+1)d_{n-1,k}+(n-k)d_{n-1,k-1},\,\,\quad\text{if $k\neq n-1$}.
 \end{cases}
 \end{equation}
 \end{lemma}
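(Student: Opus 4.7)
The plan is to derive the PDE-form recurrence \eqref{der} from the Désarménien-Foata / Gessel-Reutenauer identity \eqref{gessel} via generating function manipulation, and to obtain \eqref{recu:der} as an immediate coefficient extraction.

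For the equivalence between the two formulations, I would expand each factor of \eqref{der} in powers of $t$ and read off the coefficient of $t^k$. For $k\le n-2$, collecting the contributions of $D_{n-1}(t)$, $tD_{n-1}(t)$, $tD'_{n-1}(t)$ and $t^2D'_{n-1}(t)$ yields precisely $(k+1)d_{n-1,k}+(n-k)d_{n-1,k-1}$. The extra $(-1)^nt^{n-1}$ term adjusts $d_{n,n-1}$ to the stated boundary value, using that $d_{n-1,n-2}$ counts only the reverse permutation $(n-1)(n-2)\cdots1$, which is a derangement iff $n$ is even.

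For the recurrence \eqref{der} itself, I would set
\[
F(z,t)\;:=\;\sum_{n\ge0}\frac{D_n(t)}{(1-t)^{n+1}}z^n
\]
with the natural convention $D_0(t)=1$, $D_1(t)=0$, so that by \eqref{gessel}
\[
F(z,t)\;=\;\sum_{r\ge1}\frac{t^{r-1}(1-z)^r}{1-rz}.
\]
Multiplying each term of \eqref{der} by $z^n/(1-t)^{n+1}$ and summing over $n\ge2$ translates \eqref{der} into a single functional equation for $F$. The correction $(-1)^nt^{n-1}$ sums to a closed form $tz^2/[(1-t)^2(1-t+tz)]$, while the shifted pieces $D_{n-1}$, $(n-1)tD_{n-1}$ and $t(1-t)D'_{n-1}$ reassemble into linear combinations of $F$, $z\,\partial_zF$ and $\partial_tF$ (using that $\partial_tF$ differs from $\sum D'_m z^m/(1-t)^{m+1}$ by a tail involving $F$ and $z\partial_zF$). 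After collecting, the $\partial_zF$ contributions cancel exactly and \eqref{der} becomes equivalent to the first-order linear PDE
\[
(1-z)\,F(z,t)\;-\;tz\,\partial_tF(z,t)\;=\;\frac{1-z}{1-t+tz}.
\]

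Finally, I would verify this PDE directly from \eqref{gessel}. The $r$th summand of $(1-z)F-tz\,\partial_tF$ is
\[
\frac{t^{r-1}(1-z)^r\bigl[(1-z)-(r-1)z\bigr]}{1-rz},
\]
and the algebraic identity $(1-z)-(r-1)z=1-rz$ cancels the denominator, collapsing the expression into the telescoping geometric sum $(1-z)\sum_{r\ge1}\bigl(t(1-z)\bigr)^{r-1}=(1-z)/(1-t(1-z))$, which equals the target right-hand side.

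The main obstacle is the bookkeeping in the reduction step: one must carefully track the index shifts $n\mapsto n-1$, recognize $\sum_n(n-1)D_{n-1}z^n/(1-t)^{n+1}$ as a scaled $z\,\partial_zF$, and verify the delicate cancellation of $\partial_zF$ contributions coming from the $(1+(n-1)t)D_{n-1}$ and $t(1-t)D'_{n-1}$ pieces. Once the PDE has been isolated, the closing telescoping calculation is short and clean.
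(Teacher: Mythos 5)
Your strategy is sound and the key computation checks out: with $F(z,t)=\sum_{n\ge0}D_n(t)z^n/(1-t)^{n+1}$ and the conventions $D_0=1$, $D_1=0$, the recurrence \eqref{der} (which in fact already holds for $n\ge1$) is equivalent to the stated PDE $(1-z)F-tz\,\partial_tF=(1-z)/(1-t+tz)$, and the cancellation $(1-z)-(r-1)z=1-rz$ does collapse the right-hand side of \eqref{gessel} into the geometric series $\sum_{r\ge1}t^{r-1}(1-z)^r$ exactly as you claim. This is the same basic route as the paper's --- both proofs apply a $t$-differential operator to \eqref{gessel} so as to bring down the factor $r$ from $t^{r-1}$ --- but the executions differ: the paper first extracts $[z^n]$, writes $D_n(t)/(1-t)^{n+1}=\sum_{r}t^{r-1}T_r(n)$ with $T_r(n)=\sum_k(-1)^k\binom{r}{k}r^{n-k}$, and uses the recurrence $T_r(n)=rT_r(n-1)+(-1)^n\binom{r}{n}$ (the binomial correction being the source of the $(-1)^nt^{n-1}$ term), whereas you keep the full bivariate series and let the correction emerge from the closed form $tz^2/[(1-t)^2(1-t+tz)]$. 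You trade the explicit alternating sums for some $\partial_zF$ bookkeeping; both are legitimate, and your telescoping verification of the PDE is arguably the cleaner of the two.

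One small but genuine slip in the boundary-case discussion: you assert that the decreasing permutation $(n-1)(n-2)\cdots1$ of length $n-1$ is a derangement iff $n$ is even. The parity is reversed: it has the fixed point $n/2$ precisely when $n-1$ is odd, so it is a derangement iff $n$ is \emph{odd}, i.e.\ $d_{n-1,n-2}=1$ for $n$ odd and $0$ for $n$ even. As written, the check $d_{n,n-1}=(-1)^n+d_{n-1,n-2}$ would produce $2$ for even $n$ and $-1$ for odd $n$; with the corrected parity it gives $1$ and $0$ as required by \eqref{recu:der}.
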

 
 \begin{proof}
 Extracting the coefficient of $z^n$ on both sides of~\eqref{gessel} gives
 \begin{equation}\label{euler}
 \frac{D_n(t)}{(1-t)^{n+1}}=\sum_{r\geq1}t^{r-1}T_r(n),
 \end{equation}
 where 
$T_r(n)=\sum_{k=0}^{\min\{n,r\}}(-1)^k{r\choose k}r^{n-k}$. Since
 $$
 T_r(n)=
 \begin{cases}
 rT_r(n-1), &\text{if $1\leq r\leq n-1$};\\
 rT_r(n-1)+(-1)^n{r\choose n}, &\text{otherwise},
 \end{cases}
 $$
we derive  from~\eqref{euler} that 
 \begin{align*}
 t\left(\frac{tD_{n-1}(t)}{(1-t)^n}\right)'&=\sum_{r\geq1}t^rrT_r(n-1)\\
 &=\sum_{r\geq1}t^rT_r(n)-\sum_{r\geq n}t^r(-1)^n{r\choose n}\\
 &=\frac{tD_n(t)-(-t)^n}{(1-t)^{n+1}}.
 \end{align*}
 After simplifying we get~\eqref{der}.
 \end{proof}
 \begin{remark} Let $d_n=D_n(1)$ be the cardinality of  $\D_n$. 
 When $t=1$, Eq.~\eqref{der} reduces to the well-known recurrence relation 
$ d_n=(-1)^n+nd_{n-1}$.
It is also reminiscent of the recurrence 
 \begin{equation}\label{rec:euler}
 A_n(t)=(1+(n-1)t)A_{n-1}(t)+t(1-t)A'_{n-1}(t)
 \end{equation}
for the Eulerian polynomials.
 \end{remark}

 A {\em desarrangement} is a permutation whose first ascent is  even, where an index $i\in[n]$ is an {\em ascent} of $\pi\in\S_n$ if $\pi_i<\pi_{i+1}$ (by convention $\pi_{n+1}=+\infty$). For example, $653\red{\bf2}41$ is a desarrangement but $32\red{\bf1}564$ is not.  
 Let  $\E_n$ be the set of all desarrangements in $\S_n$.
 
 \begin{proof}[A bijective proof of~\eqref{recu:der}] 
 By a result of D\'esarm\'enien and Wachs~\cite[Corollary~3.3]{dw}) 
we have 
 $$
 d_{n,k}=\{\pi\in\E_n:\ides(\pi)=k\}.
 $$

We say that an index $i$, $1\leq i\leq n-1$, is an {\em inverse descent} of $\pi\in\S_n$ if $i+1$ appears to the left of $i$ in $\pi$. Clearly, the number of inverse descents of $\pi$ is $\ides(\pi)$. When $n$ is even, the only desarrangement in $\E_n$ with $n-1$ inverse descents is $n(n-1)\cdots 21$, so $d_{n,n-1}=1$ in this case. In the $n$ odd case, there is not desarrangement of length $n$ with $n-1$ inverse descents and $d_{n,n-1}=0$ follows. In the following, we can assume that $1\leq k<n-1$.

There is a natural bijection  from $\S_{n-1}\times[n]$ to $\S_n$ defined by 
\begin{equation}\label{mapping}
(\pi,j)\mapsto\sigma=\sigma_1\cdots\sigma_n, 
\end{equation}
where $\sigma_n=j$ and for $ i\in[n-1]$, $\sigma_i=\pi_i+1$ if $\pi_i\geq j$, otherwise $\sigma_i=\pi_i$.
It is routine to check that 
$$
\ides(\sigma)=
\begin{cases}
\,\,\ides(\pi),\qquad&\text{if $j-1$ is an inverse descent of $\pi$;}\\
\,\,\ides(\pi)+1,\qquad&\text{otherwise}.
\end{cases}
$$
Recurrence relation~\eqref{recu:der} then follows from this property and the fact that in~\eqref{mapping} if $\sigma$ is a desarrangement in $\E_n$ with $k$ ($k<n-1$) inverse descents then $\pi$ is a desarrangement.
 \end{proof}
 
 From~\eqref{der} we can readily deduce that $\deg(D_{2n+1}(t))=2n-1$ and $D_{2n}(t)$ is a monic polynomial of degree $2n-1$. Moreover, the coefficient of $t$ in $D_n(t)$ is $2^{n-2}$.
  
 \begin{proof}[Proof of Theorem~\ref{thm:spiral}]
  It is easy to check that statement~\eqref{spiral} is true for $n\leq 3$. We proceed to prove the statement by induction on $n$ using recurrence~\eqref{recu:der}.
 
 Suppose that $m\geq4$ and statement~\eqref{spiral} is true for $n=m-1$. We first show that $d_{2m,2m-k}<d_{2m,k}< d_{2m,2m-k-1}$ for $1\leq k\leq m-1$. By the recurrence relation~\eqref{recu:der} for $d_{n,k}$, we have
 \begin{equation}\label{eq:1}
 d_{2m,2m-k}=(2m-k+1)d_{2m-1,2m-k}+kd_{2m-1,2m-k-1}
 \end{equation}
 if $k\neq1$ and
  \begin{equation}\label{eq:2}
 d_{2m,k}=(k+1)d_{2m-1,k}+(2m-k)d_{2m-1,k-1}
 \end{equation}
  \begin{equation}\label{eq:3}
 d_{2m,2m-k-1}=(2m-k)d_{2m-1,2m-k-1}+(k+1)d_{2m-1,2m-k-2}.
 \end{equation}
 Clearly, $d_{2m,2m-1}=1<2^{2m-2}=d_{2m,1}$. It follows from~\eqref{eq:1} and~\eqref{eq:2} that, for $k\geq2$, 
 \begin{align*}
 d_{2m,k}-d_{2m,2m-k}&=(2m-k+1)(d_{2m-1,k-1}-d_{2m-1,2m-k})\\
 &\quad+k(d_{2m-1,k}-d_{2m-1,2m-k-1})+(d_{2m-1,k}-d_{2m-1,k-1}).
 \end{align*}
 By the inductive hypothesis, the difference in every parenthesis in the above expression is positive, which implies that $d_{2m,k}>d_{2m,2m-k}$. Similarly, by~\eqref{eq:2} and~\eqref{eq:3} we have
 \begin{align*}
 d_{2m,2m-k-1}-d_{2m,k}&=(2m-k)(d_{2m-1,2m-k-1}-d_{2m-1,k-1})\\
 &\quad+(k+1)(d_{2m-1,2m-k-2}-d_{2m-1,k}).
 \end{align*}
 Again, by the inductive hypothesis, we deduce that $d_{2m,2m-k-1}>d_{2m,k}$. This completes the proof of the first part of statement~\eqref{spiral} for $n=m$. 
 It remains to show the second part of statement~\eqref{spiral} for $n=m$, which is omitted due to the  similarity. This completes the proof of the theorem by induction. 
 \end{proof}

\section{Concluding remarks and open problems}\label{sec:gessel}

 The combinatorial interpretation for $\gamma_{n,k}^S$ that we established in Theorem~\ref{gamma:s} nicely parallels those for $\gamma_{n,k}^A$ and $\gamma_{n,k}^N$, and note that $\S_n(231)\subseteq\S_n(2413,3142)\subseteq\S_n$. This in particular will give as by-product: the descent polynomials on permutations that contain at least one of the patterns $(2413,3142)$ are also $\gamma$-positive. Similar result holds for  $\S_n(231)$, $\S_n\backslash\S_n(231)$ and $\S_n(2413,3142)\backslash\S_n(231)$. This observation raises a natural question: are there any other subsets  of $\S_n$ that enjoy the same property? The reader is referred to the book of Kitaev~\cite{ki} for other interesting pattern avoiding classes of permutations.

Theorem~\ref{gamma:s} was partially motivated  by the second author's recent proof~\cite{lin} of 
a conjecture of 
Gessel \cite{br},  which  states that for $n\geq 1$, there exist \emph{nonnegative} integers $\gamma_{n,i,j}$, $0\leq i,j$, $j+2i\leq n-1$, such that 
\begin{equation}
\label{tEu}
\sum_{\sigma\in \S_n}s^{\ides(\sigma)}t^{\des(\sigma)}
= \sum_{i,j\geq 0}\gamma_{n,i,j}(st)^i(1+st)^j(s+t)^{n-1-j-2i},
\end{equation}
where $\ides(\sigma)$ denotes the number of descents of $\sigma^{-1}$.
Note that we recover Eulerian polynomial and its $\gamma$-expansion~\eqref{Eulerian} by 
setting $s=1$ in~\eqref{tEu}. 
In an effort to find combinatorial interpretation of $\gamma_{n,i,j}$ in~\eqref{tEu}, we restricted our attention to the terms without $s+t$, whose coefficients are $\gamma_{n,i,n-1-2i}$. This  leads us naturally to consider the operations $\oplus$ and $\ominus$, as 
 \begin{align*}
\des(\pi\oplus\sigma)=\des(\pi)+\des(\sigma),\qquad & \ides(\pi\oplus\sigma)=\ides(\pi)+\ides(\sigma);\\
\des(\pi\ominus\sigma)=\des(\pi)+\des(\sigma)+1,\qquad & \ides(\pi\ominus\sigma)=\ides(\pi)+\ides(\sigma)+1.
\end{align*}
In view of Proposition~\ref{desides}, we have 
\begin{align}\label{des-ides}
\des(\pi)=\ides(\pi)
\end{align}
for each $\pi\in\S_n(2413,3142)$.   Another  interesting 
class of permutations satisfying \eqref{des-ides}  is the set of  {\em involutions}.  A conjecture on the $\gamma$-positivity of the descent polynomial on involutions 
in $\S_n$ 
 was first made by Guo and Zeng~\cite{gz} and   is  still open.
 

It follows from Theorem~\ref{gamma:s} that 
$\gamma^A_{n,k}\geq  \gamma^S_{n,k}$,
which is hard to be proved by analysis using generating functions.
Originally, it is the attempt to prove this inequality that inspires us to find the interpretation of $\gamma^S_{n,k}$ in Theorem~\ref{gamma:s}. 
 Regarding Gessel's conjecture,  the following stronger inequality seems also true.

\begin{conjecture}
Let $\gamma_{n,i,j}$ be defined by~\eqref{tEu}. Then,
$$
\gamma_{n,k,n-1-2k}\geq \gamma^S_{n,k}.
$$
\end{conjecture}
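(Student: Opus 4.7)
The plan is to deduce the conjecture from a subset inclusion, provided that the appropriate combinatorial interpretation of $\gamma_{n,k,n-1-2k}$ can be extracted from Lin's proof~\cite{lin} of Gessel's conjecture. Specifically, numerical checks for small $n$ suggest that the extremal coefficient (the one multiplying the $(s+t)$-free monomial $(st)^{k}(1+st)^{n-1-2k}$ in~\eqref{tEu}) satisfies
\begin{equation*}
\gamma_{n,k,n-1-2k}=\#\{\sigma\in\S_n:\dd(\sigma)=0,\ \des(\sigma)=\ides(\sigma)=k\}.
\end{equation*}
I would first confirm this identity by revisiting Lin's construction. Intuitively, $(s+t)$-factors correspond to orbits that can independently produce a descent or an inverse descent, while the $(st)$ and $(1+st)$ factors force descents and inverse descents to appear in lockstep, so orbits trapped on the diagonal $\des=\ides$ with $\dd=0$ should be precisely those giving this extremal coefficient.

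Next, by iterating Proposition~\ref{desides} along the recursive construction of a separable permutation as a sequence of direct and skew sums of singletons (equivalently, by induction on the di-sk tree), every separable permutation $\pi$ satisfies $\des(\pi)=\ides(\pi)$. Hence the set
\begin{equation*}
\S^S_{n,k}=\{\pi\in\S_n(2413,3142):\dd(\pi)=0,\ \des(\pi)=k\}
\end{equation*}
of cardinality $\gamma^S_{n,k}$ (by Theorem~\ref{gamma:s}) is automatically contained in the larger set above, and the conjectured inequality drops out at once.

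The main obstacle is pinning down the precise form of Lin's interpretation. If his statement uses auxiliary statistics rather than $\des$, $\ides$, $\dd$ directly, a translation step would be required; however, the parallel structure of Proposition~\ref{desides}---the fact that $\oplus$ and $\ominus$ increment $\des$ and $\ides$ by the same amount---is exactly the mechanism forcing separable permutations onto the diagonal $\des=\ides$, so such a translation should be natural. The inclusion is strict in general: for example, $\sigma=24153\in\S_5$ has $\dd(\sigma)=0$ and $\des(\sigma)=\ides(\sigma)=2$ but contains the pattern $2413$, so $\sigma\notin\S^S_{5,2}$; correspondingly one expects $\gamma^S_{5,2}=10<16=\gamma_{5,2,0}$.
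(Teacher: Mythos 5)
The statement you are addressing is left as an \emph{open conjecture} in the paper: the authors supply no proof, and Remark~\ref{ges:moti} makes clear that the search for a combinatorial interpretation of the coefficients $\gamma_{n,i,j}$ in~\eqref{tEu} is precisely what motivated the whole work --- they only succeeded in interpreting the (conjecturally smaller) quantities $\gamma^S_{n,k}$. So there is no proof in the paper to compare yours against; the question is whether your argument closes the gap, and it does not. Everything rests on the identity $\gamma_{n,k,n-1-2k}=\#\{\sigma\in\S_n:\dd(\sigma)=0,\ \des(\sigma)=\ides(\sigma)=k\}$, which you propose to ``confirm by revisiting Lin's construction.'' Lin's proof of Gessel's conjecture in~\cite{lin} establishes the nonnegativity of the $\gamma_{n,i,j}$ by recursive/algebraic means and does not furnish such an interpretation; had one been available, the authors (one of whom is Lin) would not have needed the machinery of Sections~\ref{sec:spswdt}--\ref{sec:gammasep} just to interpret $\gamma^S_{n,k}$, and they would not have left the inequality as a conjecture. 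Your plan therefore reduces the stated conjecture to another open problem --- arguably a harder one, since your proposed identity together with your inclusion would immediately imply it. That is a strategy, not a proof.

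The parts of your argument that do not depend on the unproven identity are correct and worth keeping. By Proposition~\ref{sepdc} every separable permutation decomposes into iterated direct and skew sums of singletons, so Proposition~\ref{desides} gives $\des(\pi)=\ides(\pi)=n_\ominus(T(\pi))$ for all $\pi\in\S_n(2413,3142)$; hence the set $\S_{n,k}^S$ of Theorem~\ref{disktr} is contained in $\{\sigma\in\S_n:\dd(\sigma)=0,\ \des(\sigma)=\ides(\sigma)=k\}$, and your example $24153$ correctly witnesses that the inclusion can be strict. Your proposed interpretation is also numerically consistent in small cases (for $n=4$, $k=1$ both sides equal $7$). But until the interpretation of $\gamma_{n,k,n-1-2k}$ is actually established, the key step is missing and the conjecture remains open.
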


Let $\widetilde{D}_n(t)$ be the descent polynomial on $\S_n\backslash\D_n$. It follows from~\eqref{der} and~\eqref{rec:euler} that 
 \begin{equation*}
\widetilde{D}_n(t)=(-t)^{n-1}+(1+(n-1)t)\widetilde{D}_{n-1}(t)+t(1-t)\widetilde{D}^{\prime}_{n-1}(t),
 \end{equation*}
since $\widetilde{D}_n(t)=A_n(t)-D_n(t)$. By similar discussion as in the proof of Theorem~\ref{thm:spiral}, we can show that $\widetilde{D}_n(t)$ also has the spiral property, which implies its unimodality.

Finally, the two descent polynomials $S_n(t)$ and $D_n(t)$ seem to be 
real-rooted based on our computational experiments. We pose this as a conjecture for further investigation.
\begin{conjecture} 
The descent polynomials $S_n(t)$ and $D_n(t)$ are real-rooted for each $n\geq2$.
\end{conjecture}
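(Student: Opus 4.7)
The plan is to handle $D_n(t)$ and $S_n(t)$ separately, since their defining recurrences have very different structures. In both cases I would induct on $n$, maintaining a strengthened hypothesis that also tracks the location of the roots (presumably in $[-1,0]$).

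For $D_n(t)$, I would rewrite the recurrence~\eqref{der} as
\[ D_n(t) = U_n[D_{n-1}](t) + (-1)^n\, t^{n-1}, \qquad U_n[P](t) := (1 + (n-1)t)\,P(t) + t(1-t)\,P'(t). \]
The operator $U_n$ is exactly the Brenti-type differential operator that sends $A_{n-1}$ to $A_n$ via~\eqref{rec:euler}, and it is classical that $U_n$ preserves real-rootedness of polynomials whose roots lie in $[-1,0]$. The first half of the inductive step, applied to $U_n[D_{n-1}]$, is therefore immediate; the task reduces to controlling how much the perturbation $(-1)^n t^{n-1}$ displaces the roots. My first attempt would be a continuity argument: introduce the one-parameter family $D_n(t;s) := U_n[D_{n-1}](t) + s\,t^{n-1}$ and track the roots as $s$ varies from $0$ to $(-1)^n$, using Rouché/sign-change bookkeeping to ensure no pair of real roots leaves the real line. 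As an alternative, I would work with the pair $(D_n(t), \widetilde{D}_n(t))$ introduced in the concluding section: since $A_n = D_n + \widetilde{D}_n$ is real-rooted and $\widetilde{D}_n$ satisfies the same operator recurrence with the opposite sign in the correction, a Hermite--Biehler argument should show that $(D_n,\widetilde{D}_n)$ is an interlacing pair, yielding real-rootedness of both simultaneously.

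For $S_n(t)$, the cubic recurrence~\eqref{rec:ss} blocks a direct interlacing induction. Two strategies look promising. The first is to descend to the $\gamma$-polynomial $\Gamma_n(x)$: if one can show $\Gamma_n$ has only real nonpositive roots, then the substitution $x = t/(1+t)^2$ together with $S_n(t) = (1+t)^{n-1}\Gamma_n(x)$ delivers real roots of $S_n(t)$ in $[-1,0]$. The $\Gamma_n$-recurrence from the last corollary has nonnegative coefficients, so I would attempt a Chudnovsky--Seymour compatibility argument on the triple convolution $\Gamma_i \Gamma_j \Gamma_{n-j-i}$. The second strategy is to exploit the refined splitting $S_n(t) = S_n^{(1)}(t) + S_n^{(2)}(t)$ from Section~\ref{subsec:gfSn}, whose governing system~\eqref{s1} is a linear (not cubic) convolution of the $S_j$'s with the $S_j^{(i)}$'s, and to try to show that the pair $(S_n^{(1)}, S_n^{(2)})$ is mutually interlacing; the linearity of~\eqref{s1} makes this the more realistic option.

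The principal obstacle for $D_n(t)$ is the alternating sign of the correction $(-1)^n t^{n-1}$: any continuity argument must handle both parities and verify that roots pushed toward $-\infty$ at one step return to $[-1,0]$ at the next. For $S_n(t)$, the hard part is genuinely the cubic nonlinearity: classical compatibility/interlacing results are tailored to linear combinations, not to triple products, so the entire game is to find a refinement (via $(S_n^{(1)},S_n^{(2)})$, or via $\Gamma_n$ together with an auxiliary polynomial) rich enough to linearize the induction while still closed under the recurrence. This is where I expect to get stuck, and it explains why the authors left the statement as a conjecture rather than a theorem.
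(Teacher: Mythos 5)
The first thing to say is that the paper does not prove this statement: it is explicitly posed as a conjecture ``basing on computational experiments'' in the concluding section, so there is no proof of record to compare yours against. What you have written is, by your own admission, a research plan rather than a proof, and each of its branches stops exactly at the step that would constitute the actual mathematical content. Concretely: for $D_n(t)$, the assertion that $U_n$ preserves real-rootedness on $[-1,0]$ is fine, but the whole difficulty is the perturbation $(-1)^n t^{n-1}$, and ``Rouch\'e/sign-change bookkeeping'' is a placeholder, not an argument --- note also that $\deg D_{2n+1}(t)=2n-1$ while $\deg D_{2n}(t)=2n-1$ as well, so the degree of the correction term matches the degree of $U_n[D_{n-1}]$ only for half the parities, which already complicates any root-tracking. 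Your Hermite--Biehler alternative is a non sequitur as stated: knowing that $A_n=D_n+\widetilde{D}_n$ is real-rooted gives essentially no information about the summands (a real-rooted polynomial decomposes into non-real-rooted pieces in countless ways), and Hermite--Biehler runs in the opposite direction --- one needs to \emph{establish} that $(D_n,\widetilde{D}_n)$ interlace, which is again the whole problem.

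For $S_n(t)$ the gaps are similar in kind. The Chudnovsky--Seymour compatibility machinery applies to \emph{linear} combinations of a compatible family of polynomials; the recurrence for $\Gamma_n(x)$ involves convolution sums of double and triple \emph{products} $\Gamma_i\Gamma_j\Gamma_{n-j-i}$, and sums of products of real-rooted polynomials are not real-rooted in general, so nonnegativity of the coefficients buys you nothing here. The refined system~\eqref{s1} is indeed linear in the pair $(S_n^{(1)},S_n^{(2)})$, but it convolves against the full sequence $S_1,\dots,S_{n-1}$, so an interlacing induction on the pair would still need control over all the products $S_j S_{n-j}^{(i)}$ simultaneously; you have not identified the interlacing statement that would close this induction, and it is not clear one exists in this form. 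In short, the proposal correctly identifies plausible entry points and correctly identifies where each one breaks, but none of the breaks is repaired; the statement remains, both in the paper and in your write-up, a conjecture.
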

 
\section*{Acknowledgement} 
We thank an anonymous referee for her/his useful suggestions which improved the exposition of our main bijection.  This work was done during the 
2015 NIMS Thematic Program on Combinatorics 
at CAMP (Center for Applications of Mathematical Principles), Daejeon.
We  would like to thank CAMP for providing excellent working condition
during this program. 

The first author's research was partially supported by the National Science Foundation of China grant 11501061 and the Fundamental Research Funds for the Central Universities No.~CQDXWL-2014-Z004. The second author's research was partially supported by the National Science Foundation of China grant 11501244.


\end{document}